\theoremstyle{plain}
\newtheorem{theorem}{Theorem}
\newtheorem{lemma}{Lemma}
\theoremstyle{definition}
\newtheorem{definition}{Definition}
\theoremstyle{remark}
\newtheorem{example}{Example}
\newtheorem{remark}{Remark}
\newcommand \Stackrel[2]{\mathrel{\makebox[.8em][c]{$\stackrel{#1}{#2}$}}}
\newcommand \sumbox[1]{\makebox[3em][c]{$\displaystyle#1$}}
\newcommand \barOmega {\overline{\rule{0pt}{7pt}\smash\varOmega}}
\newcommand \tcup    {\operatorname*{\textstyle\bigcup}}
\newcommand \J       {\mathcal J}
\newcommand \hJ      {\hat{\mathcal J}}
\newcommand \hphi    {\hat\varphi}
\newcommand \hj      {{\hat\jmath}}
\newcommand \homega  {{\hat\omega}}
\newcommand \Vc      {{V_{\mathrm c}}}
\newcommand \vc      {v_{\mathrm c}}
\newcommand \uc      {u_{\mathrm c}}
\newcommand \vct     {v_{\mathrm c}}
\newcommand \ucm     {u_{\mathrm c}^m}
\newcommand \uco    {u_{\mathrm c}^\varOmega}
\newcommand \Vcm     {{V_{\mathrm c}^m}}
\newcommand \Vco    {{V_{\mathrm c}^\varOmega}}
\newcommand \Vf      {{V_{\mathrm f}}}
\newcommand \Div     {\operatorname{div}}
\newcommand \supp    {\operatorname{supp}}
\newcommand \diam    {\operatorname{diam}}
\newcommand \dist    {\operatorname{dist}}
\newcommand \de      {\,\mathrm d}
\newcommand \rhs     g
\newcommand \ceq     {\mathrel{\raisebox{.06ex}:{=}}}
\newcommand \qedsquare     {}
\newcommand \raiseqedsquare     {\\[-3.25em]}
\newcommand \subsetsim[1][]{\mathrel{\rule[-.45em]{0pt}{1.7em}\smash{\underset{\raisebox{5pt}[0pt][0pt]{$\scriptstyle\sim$}}{\stackrel{#1}\subset}}}}
\title{Multiscale Partition of Unity}
\author{Patrick Henning\footnote{ANMC, Section de Math\'{e}matiques, \'{E}cole polytechnique f\'{e}d\'{e}rale de
Lausanne -- \texttt{patrick.henning@epfl.ch}}
\\
Philipp Morgenstern\footnote{Institut f\"ur Numerische Simulation, Rheinische Friedrich-Wilhelms-Universit\"at Bonn -- \texttt{morgenstern@ins.uni-bonn.de}} 
\\
Daniel Peterseim\footnote{Institut f\"ur Numerische Simulation, Rheinische Friedrich-Wilhelms-Universit\"at Bonn -- \texttt{peterseim@ins.uni-bonn.de}}}
\begin{document}

\maketitle

\begin{abstract}
  \textbf{Abstract.} We introduce a new Partition of Unity Method for the numerical homogenization of elliptic partial differential equations with arbitrarily rough coefficients. We do not restrict to a particular ansatz space or the existence of a finite element mesh. The method modifies a given partition of unity such that optimal convergence is achieved independent of oscillation or discontinuities of the diffusion coefficient. The modification is based on an orthogonal decomposition of the solution space while preserving the partition of unity property. This precomputation involves the solution of independent problems on local subdomains of selectable size. We deduce quantitative error estimates for the method that account for the chosen amount of localization. Numerical experiments illustrate the high approximation properties even for `cheap' parameter choices. 
  
  \textbf{Keywords.} partition of unity method, multiscale method, LOD, upscaling, homogenization.
\end{abstract}

\section{Introduction}
\label{HMP::sec:introduction}
In this paper, we present a novel Multiscale Partition of Unity Method for reliable numerical homogenization in the meshfree context. 

The Partition of Unity Method (PUM) was introduced by Babu\v ska  and Melenk in \cite{BM96,Melenk:Babuska:1996}, with the motivation that known singularities of the solution of a given PDE can be embedded into the ansatz space. Examples of Partition of Unity Methods can be found in \cite{Duarte:Oden:1996,Griebel:Schweitzer:2000,Griebel:Schweitzer:2002,Holst:2003,Liszka:Duarte:Tworzydlo:1996,Oden:Duarte:Zienkiewicz:1998,Wang:Huang:Li:2008}.
Specific realizations of methods that fit into the general PUM framework but which are formulated 
in the context of finite element methods are the Extended Finite Element Method (XFEM, cf. \cite{Belytschko:Moes:Usui:Parimi:2001,Belytschko:Moes:Usui:Parimi:1999}), the Generalized Finite Element Method (GFEM, cf. \cite{Duarte:Babuska:Oden:2000,Duarte:Kim:2008,Duarte:Reno:Simone:2007,Kim:Duarte:Proenca:2012,Strouboulis:Babuska:Copps:2000,Strouboulis:Babuska:Copps:2001}) and the Stable GFEM presented in \cite{Gupta:Duarte:Babuska:Banerjee:2013}. More general surveys on XFEM and GFEM can be found in \cite{Babuska:Banerjee:Osborn:2003,Fries:Matthies:2004,Schweitzer:2012}.

In contrast to local singularities (usually due to the shape of the domain), multiscale problems consider the issue of very rough coefficients all over the domain. In order to obtain a reliable numerical approximation to the solution of the multiscale problem, it is typically necessary to 'resolve the coefficient', whereas a simple local averaging of the coefficient leads to wrong approximations. This means that the discrete solution space in which we seek an adequate Galerkin approximation must be able to fully capture the fine structures of the coefficient. Practically, this often leads to very large spaces and therefore to tremendous computational efforts. One approach to overcome this difficulty is to construct a special low dimensional space that incorporates the relevant fine scale features in its basis functions and that exhibits high approximation properties. A locally supported basis of this space can be computed in parallel by solving fine scale problems in small patches. This approach has been 
studied extensively for Finite Elements in \cite{HMP12,HP13,MP11,MP12}.

Other numerical multiscale methods can be found in \cite{Abdulle:E:Engquist:Vanden-Eijnden:2012,Babuska:Caloz:Osborn,E:Engquist:2003,Gloria:2011,Hou:Wu:1997,MR1660141,Malqvist:2011}. In the context of meshfree methods we refer to recent papers \cite{MR2801210, BOZ13} where elliptic problems with rough coefficients are treated by introducing special non-polynomial shape functions, i.e., local eigenfunctions in \cite{MR2801210} and rough polyharmonic splines in \cite{BOZ13}. 

This paper aims to generalize the mesh-based approach of \cite{HMP12,HP13,MP11,MP12} to general ansatz spaces without the requirement of underlying finite element meshes. 

Throughout the paper, our model problem consists of finding a stationary heat distribution in some heterogenous media.
Let $A\in L^\infty(\varOmega,\mathbb{R}^{d\times d}_{sym})$ be a symmetric coefficient with uniform spectral bounds $\beta\geq\alpha>0$ in some bounded Lipschitz domain $\varOmega\subset\mathbb R^d$ for $d=1,2,3$, i.e.,
\begin{alignat*}{2}
 0&<\alpha&&\ceq\operatorname*{ess\,inf}\limits_{x\in\varOmega}\inf_{v\in\mathbb R^d\setminus\{0\}}\frac{\bigl(A(x)v,v\bigr)}{(v,v)},\\
 \infty&>\beta&&\ceq\operatorname*{ess\,sup}_{x\in\varOmega}\sup_{v\in\mathbb R^d\setminus\{0\}}\frac{\bigl(A(x)v,v\bigr)}{(v,v)}.
\end{alignat*}

This coefficient $A$ may be strongly heterogenous and arbitrarily rough.
We consider the prototypical second-order linear elliptic PDE
\begin{equation}\tag{1a}
    -\Div A\nabla u = \rhs
\end{equation}
with homogeneous Neumann boundary condition
\begin{equation}\tag{1b}
A \nabla u \cdot \nu = 0 \quad \text{on }\partial \varOmega,
\end{equation}
\stepcounter{equation}%
given the exterior normal vector $\nu$ on $\partial\varOmega$ and compatible right-hand side $\rhs \in L^2(\varOmega)$ such that $$\int_\varOmega \rhs\de x=0.$$

We are looking for the unique (up to a constant) weak solution of problem (1a--b). This is, for $V\ceq H^1(\varOmega)$, find $u \in V/\mathbb R=\{v\in V\mid \int_\varOmega v\de x=0\}$ with
\begin{equation}\label{HMP::eq:weakPMP}
a(u,\phi)\ceq\int_\varOmega A \nabla u \cdot \nabla \phi \de x= \int_\varOmega \rhs \phi \de x \quad \text{for all } \phi \in V /\mathbb R.
\end{equation}

\section{Abstract Multiscale Partition of Unity}
In this section, we propose a Multiscale Partition of Unity Method without restriction to a particular ansatz space
or even the existence of a mesh.
This method is built upon two abstract (and possibly equal) partitions of unity that will be introduced in Section~\ref{HMP::sec:pums}. Another crucial tool for the design of the method and its error analysis is a quasi-interpolation operator presented in Section~\ref{HMP::sec:interpolator}. In the third and last subsection, we finally define the novel multiscale partition of unity method based on a localized orthogonal decomposition of $V$.

\subsection{Two Partitions of Unity}\label{HMP::sec:pums}
The subsequent derivation of the multiscale method is based upon two standard partitions of unity.
One partition is regular and spans a coarse space $\Vc$. The other partition may be discontinuous and is solely used for the localization of the corrector problems in Section~\ref{HMP::sec:mspum}.

\begin{definition}[Partitions of Unity]\label{HMP::def:PoU}\ \\[-1.5em]
\begin{itemize}
\item[(PU 1)] Let $\J$ denote a finite index set and $\{ \varphi_j\mid j\in \J \}$ a linearly independent Lipschitz partition of unity on $\varOmega$, i.e.
\begin{align*}
\sum_{j\in\J} \varphi_j = 1\quad\text{with}\quad&\forall\ j\in\J:\enspace0\le\varphi_j \in W^{1,\infty}(\varOmega),\\
\text{s.t. for any }&\lambda\in\mathbb R^\J,\enspace\sum_{j\in\J}\lambda_j\varphi_j = 0\quad\Leftrightarrow\quad\forall\ j\in\J:\enspace\lambda_j=0.
\end{align*}
We define $\omega_j\ceq\supp(\varphi_j)$ and $H_j\ceq \diam(\omega_j)$ for all $j\in\J$ and \text{$H\ceq\max_{j\in\J}H_j$}.
The partition of unity functions span a finite dimensional coarse space $\Vc\ceq\operatorname{span}\{ \varphi_j\mid j\in\J\}$. 
\item[(PU 2)] Let $\hJ$ denote a finite index set and $\{ \hphi_\hj\mid\hj\in\hJ \}\subseteq L^\infty(\varOmega)$ a bounded and positive partition of unity on $\varOmega$, i.e.
\begin{displaymath}
\sum_{\smash{\hj\in\hJ}} \hphi_\hj = 1 \quad \text{on} \enspace \varOmega \quad \text{and} \quad \hphi_\hj\ge0.
\end{displaymath}
We define $\homega_\hj\ceq\supp(\hphi_\hj)$ and $\hat H_\hj\ceq\diam(\homega_\hj)$  for all $\hj\in\hJ$. The maximum over all $\hat H_\hj$ is denoted by $\hat H$. 
\end{itemize}
\end{definition}

\begin{example}\label{HMP::ex:1}
The abstract definitions of (PU 1) and (PU 2) include the following special cases.
\begin{itemize}
\item[a)] (PU 2) equals (PU 1).
\item[b)] Given some regular simplicial mesh $\mathcal T$ with vertices $\mathcal N=\J$, the partition (PU 1) is the continuous piecewise affine nodal basis functions $\varphi_z$, associated with vertices $z\in\mathcal N$. Recall that $\varphi_z$ is defined by its values $\varphi_z(y)=\left\{\begin{smallmatrix}1&\text{if }y=z\\0&\text{else}\end{smallmatrix}\right.$ for vertices $y\in\mathcal N$.
(PU 2) may be chosen as the characteristic (or `indicator') functions of the triangles, i.e. $$\hJ=\mathcal T\quad\text{and}\quad\hphi_T=\chi_T\text{ for all }T\in\mathcal T.$$
\end{itemize}
\end{example}
\begin{definition}[extension patch]
\label{definition:extensionpatch}For any patch $\omega_j$ in (PU 1) and $k \in \mathbb{N}$, we define the \emph{$k$-th order extension patch} $\omega_j^k$ by
\begin{displaymath}
\omega_j^k \ceq \tcup_{x\in\omega_j}\overline{B_{k\cdot H}(x)} = \left\{ x\in\barOmega\mid \dist(x,\omega_j)\leq k\cdot H\right\}.
\end{displaymath}
where $B_{k\cdot H}(x)$ denotes the ball with radius $k\cdot H$ around $x$ and where ``$\dist$'' denotes the set distance $$\dist(x,B)\ceq\inf_{b\in B}\left\|x-b\right\|.$$
For (PU 2), the extension patches $\homega_\hj^k$, $k\in\mathbb N$ are defined analogously. 
\end{definition}
The subsequent definition serves only for the proofs. It has no practical relevance for the proposed method.
\begin{definition}[quasi-inclusion]
Given two sets $B,C\subseteq\barOmega$, the set $B$ is \emph{$n$-quasi-included} in $C$ (shorthand notation: $B\subsetsim[n] C$) if $$\forall\ j_1,\dots, j_m\in\J,\enspace k_1,\dots,k_m\in\mathbb N:\enspace C\subseteq \tcup_{i=1}^m\omega_{j_i}^{k_i} \enspace\Rightarrow \enspace B\subseteq\tcup_{i=1}^m\omega_{j_i}^{k_i+n}.$$ Note that the shorthand notation allows for quantified transitivity $$B\subsetsim[n_1]C\subsetsim[n_2]D\quad\Rightarrow\quad B\subsetsim[n_1+n_2]D.$$
\end{definition}

\subsection{Abstract Quasi-Interpolation}\label{HMP::sec:interpolator}
\begin{definition}[quasi-interpolation operator]\label{HMP::def:quasi-interpolation operator}
Throughout this paper, let $I:V \to \Vc$ denote an abstract quasi-interpolation operator which fulfills the following properties.
\begin{itemize}
\item[\textbf{(I1)}] $I$ is linear and continuous.
\item[\textbf{(I2)}] $I|_\Vc: \Vc\to \Vc$ is an isomorphism with $H^1$-stable inverse.
\item[\textbf{(I3)}] There exists a constant $C_1$ only depending on $\varOmega$ and the shape of the patches $\omega_j$ such that for all $u\in H^1(\varOmega)$ and all $j\in\J$
\begin{displaymath}
\| u - I(u) \|_{L^2(\omega_j)} \le C_1 H_j \| \nabla u \|_{L^2(\omega_j^1)},
\end{displaymath}
and a constant $C_2$ that further depends on $\max_{j\in\J} \bigl( H_j\left\|\varphi_j\right\|_{W^{1,\infty}(\varOmega)} \bigr)$ such that
\begin{displaymath}
\| \nabla I(u) \|_{L^2(\omega_j)} \le C_2 \| \nabla u \|_{L^2(\omega_j^1)}.
\end{displaymath}
\item[\textbf{(I4)}] There exists a constant $C_3$ with same dependencies as $C_2$ and some $\kappa\in\mathbb N$ depending on the overlapping of the supports $\{\omega_j\}_{j\in\J}$ such that for all $\vc \in \Vc$ there exists $v \in V$ such that
\begin{displaymath}
I(v)= \vc, \quad \|\nabla v \|_{L^2(\varOmega)} \le C_3 \| \nabla \vc \|_{L^2(\varOmega)}, \enspace\text{and} \enspace\supp(v) \subsetsim[\kappa] \supp(\vc),
\end{displaymath}
with the quasi-inclusion $\subsetsim[\kappa]$ defined above.
\end{itemize}
\end{definition}
A particular quasi-interpolation operator $I$ is given in the subsequent definition.

\begin{example}[Clement-type quasi-interpolation \cite{Car99}]\label{HMP::ex:Clement}
Define a weighted Cl\'ement-type quasi-interpolation operator
$$ I : V \to \Vc ,\quad v\mapsto I(v)\ceq \sum_{j\in\J} v_j \varphi_j \quad \text{with }v_j \ceq \frac{(v,\varphi_j)_{L^2(\varOmega)}}{(1,\varphi_j)_{L^2(\varOmega)}}.$$
\end{example}
This operator obviously satisfies \textbf{(I1)} and \textbf{(I2)}.
The properties \textbf{(I3)} have been shown in \cite{Car99} in the abstract setting of (PU 1). We verify that \textbf{(I4)} is satisfied for a particular choice of basis functions. The following result is similar to \cite[Lemma 2.1]{MP11}.\bigskip

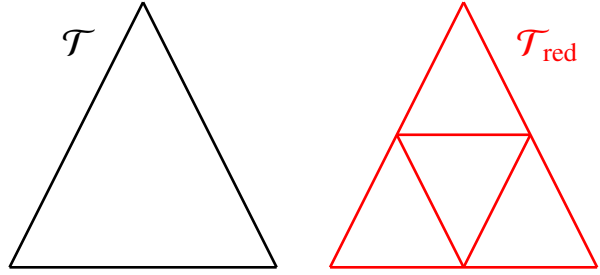
\begin{figure}[t]
\centering
\begin{picture}(220,150)
\linethickness{1.0pt}
\put(0,0){\line(1,0){100}}
\put(0,0){\line(1,2){50}}
\put(100,0){\line(-1,2){50}}
\put(20,80){{\Large$\mathcal T$}}
\linethickness{1.0pt}{\color{red}
\put(120,0){\line(1,0){100}}
\put(120,0){\line(1,2){50}}
\put(220,0){\line(-1,2){50}}
\put(145,50){\line(1,0){50}}
\put(145,50){\line(1,-2){25}}
\put(170,00){\line(1,2){25}}
\put(190,80){{\Large$\mathcal T_{\mathrm{red}}$}}}
\end{picture}
\caption{Sketch of a so-called {\it red refinement} of a single triangle in $2d$. In general, the red refinement is based on the bisection of all edges and yields at least $d+1$ simplices of same shape and half diameter.}
\label{red-refinement-figure}
\end{figure}

\begin{lemma} For a given regular triangulation $\mathcal T$ with vertices  $\mathcal N=\J$ and nodal basis functions $\left\{\varphi_z\right\}_{z\in\mathcal N}$ as in Example~\ref{HMP::ex:1}b), the quasi-interpolation operator from Example~\ref{HMP::ex:Clement} satisfies \textbf{(I4)} with $\kappa=1$.
\end{lemma}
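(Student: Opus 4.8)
\emph{Proof idea.}
The plan is \emph{not} to invert $I$ on $\Vc$ directly --- the right inverse $\bigl(I|_{\Vc}\bigr)^{-1}$ is a global operator and would spread supports over all of $\varOmega$ --- but to start from $\vc$ itself and add a \emph{local correction}: a sum of bubble functions, one attached to each vertex patch $\omega_z$, chosen so that the weighted Cl\'ement averages of the corrected function reproduce the coefficients of $\vc$, and so that the correction vanishes wherever $\vc$ is locally constant. This last feature is exactly what will yield the sharp value $\kappa=1$; the red refinement of $\mathcal T$ (Figure~\ref{red-refinement-figure}) is used only to furnish explicit bubbles with the expected scaling.

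\emph{Step 1: local bubbles and the corrected function.}
For $z\in\J$ let $\J_z\ceq\{z'\in\J: z'\text{ and }z\text{ lie in a common element of }\mathcal T\}$, so that $(b,\varphi_{z'})_{L^2(\varOmega)}=0$ automatically for any $b$ supported in $\omega_z$ and any $z'\notin\J_z$. First I would construct, for every $z\in\J$, a function $b_z\in V$ with $\supp(b_z)\subseteq\omega_z$,
\[
 (b_z,\varphi_{z'})_{L^2(\varOmega)}=\delta_{zz'}\qquad\text{for all }z'\in\J_z,
\]
together with the scaling bound $\|\nabla b_z\|_{L^2(\varOmega)}\le C\,H_z^{-1-d/2}$, $C$ depending only on the shape regularity of $\mathcal T$. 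Existence is a purely local statement: the restrictions $\{\varphi_{z'}|_{\omega_z}\mid z'\in\J_z\}$ are linearly independent (they are separated by evaluation at the vertices of $\overline{\omega_z}$), hence the biorthogonality conditions above are consistent on the $H^1$-functions supported in $\omega_z$; concretely one may take $b_z$ piecewise affine with respect to a red refinement of $\mathcal T$, which supplies enough local degrees of freedom, and the gradient bound then follows from an affine change of variables to a reference patch. Given $\vc=\sum_{z\in\J}(\vc)_z\varphi_z\in\Vc$ I would set
\[
 v\ceq\vc+\sum_{z\in\J}\gamma_z\,b_z\ \in V,\qquad
 \gamma_z\ceq(\vc)_z\,(1,\varphi_z)_{L^2(\varOmega)}-(\vc,\varphi_z)_{L^2(\varOmega)}
 =\sum_{z'\in\J_z}\bigl((\vc)_z-(\vc)_{z'}\bigr)(\varphi_{z'},\varphi_z)_{L^2(\varOmega)}.
\]
A short computation then gives $(v,\varphi_j)=(\vc,\varphi_j)+\gamma_j=(\vc)_j\,(1,\varphi_j)$ for every $j\in\J$ --- only those $z$ with $z\in\J_j$ contribute to $\sum_z\gamma_z(b_z,\varphi_j)$, and for them $(b_z,\varphi_j)=\delta_{zj}$ --- so the Cl\'ement weight of $v$ at each $j$ equals $(\vc)_j$, i.e.\ $I(v)=\vc$.

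\emph{Step 2: localization and stability.}
If $\vc$ is constant on $\omega_z$, all nodal differences in the last expression for $\gamma_z$ vanish and $\gamma_z=0$; consequently $\gamma_z\neq0$ forces $\omega_z\cap\supp(\vc)\neq\emptyset$, and since $\diam(\omega_z)\le H$ each such $\omega_z$ lies within distance $H$ of $\supp(\vc)$. Together with $\supp(v)\subseteq\supp(\vc)\cup\bigcup_{\gamma_z\neq0}\omega_z$ this yields $\supp(v)\subsetsim[1]\supp(\vc)$ straight from the definition of the quasi-inclusion. For the norm bound, the bounded overlap of $\{\omega_z\}_{z\in\J}$ gives $\|\nabla(v-\vc)\|_{L^2(\varOmega)}^2\lesssim\sum_{z\in\J}\gamma_z^2\,\|\nabla b_z\|_{L^2(\varOmega)}^2$, and I would then play the blow-up $\|\nabla b_z\|_{L^2(\varOmega)}\lesssim H_z^{-1-d/2}$ off against the smallness of $\gamma_z$: using $(1,\varphi_z)_{L^2(\varOmega)}\lesssim H_z^{d}$ and the elementwise inverse estimate $\max_{z'\in\J_z}|(\vc)_z-(\vc)_{z'}|\lesssim H_z^{1-d/2}\|\nabla\vc\|_{L^2(\omega_z)}$ (legitimate because $\nabla\vc$ is elementwise constant and $\mathcal T$ is shape regular) one gets $|\gamma_z|\lesssim H_z^{1+d/2}\|\nabla\vc\|_{L^2(\omega_z)}$, so the powers of $H_z$ cancel and a further use of bounded overlap gives $\|\nabla(v-\vc)\|_{L^2(\varOmega)}\lesssim\|\nabla\vc\|_{L^2(\varOmega)}$. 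Altogether $\|\nabla v\|_{L^2(\varOmega)}\le C_3\,\|\nabla\vc\|_{L^2(\varOmega)}$, which is \textbf{(I4)} with $\kappa=1$.

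The step I expect to require the real care is the stability estimate: the correction must be supported inside the original patches $\omega_z$ --- not in any enlargement of them, which is what pins $\kappa$ down to $1$ --- and under that constraint one has to check that the $\mathcal O(H_z^{-1-d/2})$ growth of $\|\nabla b_z\|$ is precisely compensated by the $\mathcal O(H_z^{1+d/2})$ smallness of the corrector coefficients $\gamma_z$ coming from the inverse estimate.
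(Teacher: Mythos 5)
Your proposal is correct and is essentially the paper's own argument: both correct $\vc$ by bubble functions $b_z$ supported in the \emph{original} patches $\omega_z$ whose Cl\'ement weights are prescribed (which is exactly what pins down $\kappa=1$), and your biorthogonal normalization $(b_z,\varphi_{z'})_{L^2(\varOmega)}=\delta_{zz'}$ differs from the paper's condition $I(b_z)=\varphi_z$ only by the factor $(1,\varphi_z)_{L^2(\varOmega)}$, so that your coefficient $\gamma_z=(1,\varphi_z)_{L^2(\varOmega)}\bigl(\vc(z)-I(\vc)(z)\bigr)$ reproduces the paper's correction term for term. The only cosmetic difference is the stability bookkeeping: you trade the paper's pointwise bound $|\nabla b_z|\le C|\nabla\varphi_z|$ combined with the $H^1$-stability of $I$ for explicit $H_z^{\pm(1+d/2)}$ scalings played off against a local inverse estimate, which is equally valid.
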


\begin{proof}
For any basis function $\varphi_z$, we want to find $b_z \in H^1(\varOmega)$ with $$I(b_z)=\varphi_z,\quad|\nabla b_z| \le C |\nabla \varphi_z| \text{ a.e. in $\varOmega$ and}\quad\supp(b_z)\subseteq\supp(\varphi_z).$$

Consider the red refinement $\mathcal T_{\mathrm{red}}$ of $\mathcal T$ (cf. Figure \ref{red-refinement-figure}) with nodal basis functions $\varphi_z^{\mathrm r}$. If $\operatorname{nb}(z)$ denotes the set of all neighboring nodes of $z$ in $\mathcal T_{\mathrm{red}}$ it can be verified that $$b_z=(2^{d+1}-1)\varphi_z^{\mathrm r}-\tfrac1{2}\hspace{-3pt}\sum_{y\in\operatorname{nb}(z)}\varphi_y^{\mathrm r}$$ satisfies the desired conditions.

To conclude the proof, set $$w\ceq \sum_{z\in\mathcal N}\left(\vc(z)-I(\vc)(z)\right)b_z$$
and observe that $v\ceq \vc+w$ satisfies $\vc=I(v)$, with $\supp(v)\subsetsim[1]\supp(\vc)$ and $$\|\nabla v\|\leq (1+C+C_2C)\left\|\nabla \vc\right\|.$$
\raiseqedsquare\end{proof}

\subsection{Definition of the method}\label{HMP::sec:mspum}
The goal is the construction of a space $\Vcm$ that is of the same dimension as the discrete coarse space $\Vc=\operatorname{span}\{ \varphi_j\mid j\in\J\}$ (cf.\ Definition~\ref{HMP::def:PoU}) but which exhibits high $H^1$-approximations that are inherited from the $L^2$-approximation properties of $\Vc$. Furthermore, we wish to explicitly construct a partition of unity basis for $\Vcm$.

Under the conditions \textbf{(I1)} and \textbf{(I2)} on the abstract quasi-interpolation operator, the space $V$ can be written as the direct sum
\begin{equation}\label{HMP::def-W_h}
V = \Vc\oplus \Vf, \quad \text{with} \enspace \Vf \ceq \{ v \in V \mid I(v) = 0 \}.
\end{equation}
The subspace $\Vf$ contains the fine scale features in $V$ that cannot be captured by the coarse space $\Vc$.

\begin{definition}[corrector]\label{HMP::def:corrector}
For $\hj\in\hJ$ and $m\in\mathbb N$, define the \emph{local corrector} $Q_\hj^m: \Vc\to \Vf(\homega_\hj^m)$ as the mapping of a given $\vc \in \Vc$ onto the solution $Q_\hj^m(\vc) \in \Vf(\homega_\hj^m)\ceq\bigl\{v \in \Vf\mid v=0 \enspace \text{in } \varOmega \setminus \homega_\hj^m\bigr\}$ of 
\begin{equation}
\label{HMP::local-corrector-problem}\int_{\homega_\hj^m} A \nabla Q_\hj^m(\vc)\cdot \nabla w \de x= - \int_{\homega_\hj} {\hphi}_\hj A \nabla \vc \cdot \nabla w \de x \qquad \text{for all } w \in \Vf(\homega_\hj^m).
\end{equation}
The global corrector is given by
\begin{displaymath}
Q^m(\vc)\ceq\sum_{\smash{\hj\in\hJ}} Q_\hj^m(\vc).
\end{displaymath}
For sufficiently large $m$ such that $\homega_\hj^m=\barOmega$ for all $\hj\in\hJ$, we call $Q^\varOmega\ceq Q^m$ 
the \emph{ideal corrector}.
\end{definition}
The parameter $m$ in Definition~\ref{HMP::def:corrector} reflects the locality of the method. The computational cost grows polynomially with $m$, while the error decays exponentially towards the error of the ideal (not localized) method.

Observe that the corrector problem \eqref{HMP::local-corrector-problem} always yields a unique solution. Existence is clear by the Lax-Milgram theorem, because the zero function is the only constant function in $\Vf$. For any $m\in\mathbb N$, the operator $Q^m$ is linear and we denote the corrected discrete space 
\begin{equation}\label{HMP::eq:corrected space}
\Vcm\ceq\{ \vc + Q^m(\vc)\mid \vc \in \Vc\},\quad
\Vco\ceq\{ \vc + Q^\varOmega(\vc)\mid \vc \in \Vc\}.
\end{equation}
Note that $\Vcm$ (and also $\Vco$) satisfies
\begin{displaymath}
 V=\Vcm\oplus\Vf
\end{displaymath}
and that $\{ \varphi_j + Q^m(\varphi_j)\mid j\in\J\}$ is a basis of $\Vcm$.
Moreover, the ideal method comes with $a$-orthogonality of $\Vc$ onto $\Vf$, i.e.
\begin{equation}
\label{HMP::eq:a-orthogonality}
a(\Vco,\Vf)=0.
\end{equation}

\begin{remark} The partition of unity property is preserved under correction. To prove this, it suffices to show $\sum_{j\in\J} Q^m(\varphi_j) = 0$. We compute
\begin{displaymath}
\sum_{j\in\J} Q^m(\varphi_j) = \sum_{j\in\J} \sum_{\smash{\hj\in\hJ}} Q_\hj^m(\varphi_j) = \sum_{\smash{\hj\in\hJ}} Q_\hj^m \Bigl( \sum_{j\in\J} \varphi_j \Bigr) = \sum_{\smash{\hj\in\hJ}} Q_\hj^m( 1 ) = 0.
\end{displaymath}
Hence $\{ \varphi_j + Q^m(\varphi_j)\mid j\in\J\}$ is a partition of unity. This also holds for the ideal corrector $Q^\varOmega$.
\end{remark}

The Galerkin discretization of \eqref{HMP::eq:weakPMP} with respect to the corrected space $\Vcm$, $m\in\mathbb N$ reads as follows.
\begin{definition}[Multiscale Partition of Unity Method]
Find $\ucm\in\Vcm /\mathbb R$ such that
\begin{align}\label{HMP::eq:discretePMP} 
\int_\varOmega A \nabla \ucm \cdot \nabla \vct \de x
&= \int_\varOmega \rhs\vct \de x\quad\text{for all }\vct \in \Vcm /\mathbb R.
\intertext{The \emph{ideal problem} seeks $\uco\in\Vco /\mathbb R$ such that}
\label{HMP::eq:idealDiscretePMP} \int_\varOmega A \nabla \uco \cdot \nabla \vct \de x
&= \int_\varOmega \rhs\vct \de x\quad\text{for all }\vct \in \Vco /\mathbb R.
\end{align}
\end{definition}

\section{A priori error analysis}\label{HMP::sec:estimates}
In this section, we prove error estimates for the discrete solution of \eqref{HMP::eq:discretePMP}. In the first subsection, we consider the ideal case with ansatz space $\Vco$ (cf. \eqref{HMP::eq:corrected space}). The second subsection yields an error estimate for the localized problem. We will use the notation ``$a\lesssim b$'' to state the existence of $C>0$ such that $a\leq Cb$. The hidden constant $C$ may depend on the Poincar\'e constant $C_\mathrm{Poinc}(\varOmega)$, on the ratio ${\hat H}/H$, on the constants $C_1,C_2,C_3$ and $\kappa$ from \textbf{(I1)}--\textbf{(I4)} in Definition~\ref{HMP::def:quasi-interpolation operator}, and on the operator norms of $I$ and $\big(I|_\Vc\big)^{-1}$ that result from \textbf{(I1)} and \textbf{(I2)}. The hidden constant does \emph{not} depend on the data $A$ and $\rhs$, the spectral bounds $\alpha$ and $\beta$ (in particular the contrast $\frac\beta\alpha$) or the patch sizes $H$ and $\hat H$.

\subsection{Error estimate for global basis functions}
We consider the ideal (but expensive) case of no localization (i.e. $\homega_\hj^m=\barOmega$) and observe that the proposed method inherits the optimal approximation properties. This estimate is also important in the analysis of the localized method in Section~\ref{HMP::sec:local-estimates}.

\begin{theorem}[A priori error estimate for the ideal case]
\label{HMP::thm:global-basis-error}
Let $u$ be the solution of \eqref{HMP::eq:weakPMP}. Then the discrete solution $\uco$ of \eqref{HMP::eq:idealDiscretePMP} satisfies
\begin{displaymath}
\alpha^{1/2}\bigl\|  \nabla(\uco - u) \bigr\|_{L^2(\varOmega)}\leq \bigl\| A^{1/2} \nabla(\uco - u) \bigr\|_{L^2(\varOmega)} \lesssim \alpha^{-1/2}H \| \rhs \|_{L^2(\varOmega)}.
\end{displaymath}
\end{theorem}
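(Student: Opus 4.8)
The plan is to exploit the $a$-orthogonal splitting $V=\Vco\oplus\Vf$ stated above, together with the $a$-orthogonality \eqref{HMP::eq:a-orthogonality}, and to identify the error $\uco-u$, up to an additive constant, with an element of $\Vf$. Decompose $u$ accordingly as $u=\tilde u+u_{\mathrm f}$ with $\tilde u\in\Vco$ and $u_{\mathrm f}\in\Vf$. By \eqref{HMP::eq:a-orthogonality} and symmetry of $a$ we have $a(u_{\mathrm f},\vct)=0$ for all $\vct\in\Vco$, so the coarse part satisfies $a(\tilde u,\vct)=a(u,\vct)=\int_\varOmega\rhs\vct\de x$ for all $\vct\in\Vco/\mathbb R$; hence $\tilde u$ represents the solution of \eqref{HMP::eq:idealDiscretePMP}, and $u-\uco=u_{\mathrm f}$ in $V/\mathbb R$. (This uses $\mathbb R\subseteq\Vco$, since $Q^\varOmega(1)=0$, and $\Vf\cap\mathbb R=\{0\}$, a consequence of \textbf{(I2)} applied to $1=\sum_{j\in\J}\varphi_j\in\Vc$, so that the direct sum passes to the quotient and $\tilde u$ is unique there.)

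Next I would run the standard energy computation. Using Galerkin orthogonality $a(u-\uco,\vct)=0$ for all $\vct\in\Vco/\mathbb R$ (in particular $\vct=\uco$) and the weak equation \eqref{HMP::eq:weakPMP},
\begin{displaymath}
\bigl\|A^{1/2}\nabla(\uco-u)\bigr\|_{L^2(\varOmega)}^2=a(u-\uco,u-\uco)=a(u,u-\uco)=\int_\varOmega\rhs\,(u-\uco)\de x=\int_\varOmega\rhs\,u_{\mathrm f}\de x,
\end{displaymath}
the last equality using $\int_\varOmega\rhs\de x=0$ so that the additive constant may be dropped. Cauchy--Schwarz bounds the right-hand side by $\|\rhs\|_{L^2(\varOmega)}\|u_{\mathrm f}\|_{L^2(\varOmega)}$, so everything reduces to an $L^2$-bound for $u_{\mathrm f}$.

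Since $u_{\mathrm f}\in\Vf$ means $I(u_{\mathrm f})=0$, property \textbf{(I3)} gives for every $j\in\J$
\begin{displaymath}
\|u_{\mathrm f}\|_{L^2(\omega_j)}=\|u_{\mathrm f}-I(u_{\mathrm f})\|_{L^2(\omega_j)}\le C_1H_j\|\nabla u_{\mathrm f}\|_{L^2(\omega_j^1)}\le C_1H\|\nabla u_{\mathrm f}\|_{L^2(\omega_j^1)}.
\end{displaymath}
Summing over $j$, using $0\le\varphi_j\le1$ and $\sum_{j\in\J}\varphi_j=1$ on the left (so that $\|u_{\mathrm f}\|_{L^2(\varOmega)}^2\le\sum_{j\in\J}\|u_{\mathrm f}\|_{L^2(\omega_j)}^2$) and the finite overlap of the extension patches $\{\omega_j^1\}_{j\in\J}$ on the right (a constant depending only on the shape and overlap of the patches, hence admissible), one gets $\|u_{\mathrm f}\|_{L^2(\varOmega)}\lesssim H\|\nabla u_{\mathrm f}\|_{L^2(\varOmega)}=H\|\nabla(\uco-u)\|_{L^2(\varOmega)}\le\alpha^{-1/2}H\bigl\|A^{1/2}\nabla(\uco-u)\bigr\|_{L^2(\varOmega)}$. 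Inserting this into the chain above and cancelling one power of $\bigl\|A^{1/2}\nabla(\uco-u)\bigr\|_{L^2(\varOmega)}$ yields $\bigl\|A^{1/2}\nabla(\uco-u)\bigr\|_{L^2(\varOmega)}\lesssim\alpha^{-1/2}H\|\rhs\|_{L^2(\varOmega)}$; the first inequality in the claim is just the uniform lower spectral bound $\alpha$.

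The one genuinely delicate point, I expect, is the first paragraph — getting the $a$-orthogonal decomposition of $V$ to descend to $V/\mathbb R$ and confirming that the $\Vco$-component of $u$ really is $\uco$. After that identification the remainder is a routine best-approximation argument combined with the interpolation estimate \textbf{(I3)}; the overlap constant for $\{\omega_j^1\}$ is harmless, since the hidden constant is already allowed to depend on the shape of the patches.
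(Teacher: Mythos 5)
Your argument is correct and follows essentially the same route as the paper: identify the error $u-\uco$ (modulo constants) with an element of $\Vf$ via Galerkin orthogonality and the $a$-orthogonality \eqref{HMP::eq:a-orthogonality}, run the standard energy identity to reduce to $\int_\varOmega \rhs\,(e-I(e))\de x$, and close with \textbf{(I3)} plus finite overlap of the patches. The only cosmetic differences are that you establish $u-\uco\in\Vf$ by explicitly decomposing $u$ in $V=\Vco\oplus\Vf$ rather than reading it off the two orthogonalities directly, and that you apply Cauchy--Schwarz globally after a global $L^2$-bound on the fine-scale part instead of patchwise; both variants rely on the same implicit finite-overlap assumption the paper uses.
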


\begin{proof}
Observe that we can replace the test function space $\Vco/\mathbb R$ by $\Vco$, since we subsequently only consider gradients. Galerkin orthogonality, i.e.
\begin{equation}
\label{HMP::eq:Galerkin-orthogonality}
a\left( u -  \uco ,\vct \right) = 0\quad\text{for all }\vct \in \Vco
\end{equation}
and \eqref{HMP::eq:a-orthogonality} imply that $e\ceq u-\uco\in\Vf$ and therefore $I(e)=0$. We get
\begin{align*}
\|A^{1/2}\nabla e \|_{L^2(\varOmega)}^2 &= a(e,e)\\
&\Stackrel{\eqref{HMP::eq:Galerkin-orthogonality}}= a(e,u) = a(u,e)\\
&= \int_\varOmega \rhs ( e-I( e ) ) \de x \\
&\Stackrel{\textbf{(I3)}}\lesssim H \| \rhs \|_{L^2(\varOmega)} \|  \nabla e \|_{L^2(\varOmega)}\\
&\leq \alpha^{-1/2}H \| \rhs \|_{L^2(\varOmega)} \| A^{1/2} \nabla e \|_{L^2(\varOmega)}.
\end{align*}
\raiseqedsquare
\end{proof}
\subsection{Error estimate for local basis functions}
\label{HMP::sec:local-estimates}
In this final subsection, we give error estimates for the localized method. The main result is presented below.
\begin{theorem}[A priori error estimates for the localized method]\label{HMP::t:a-priori-local}
Assume that $u\in V$ solves \eqref{HMP::eq:weakPMP}, then the discrete solution $\ucm\in\Vcm$ of \eqref{HMP::eq:discretePMP} satisfies
\begin{align*}
 \|\nabla u -\nabla \ucm\|_{L^2(\varOmega)}&\lesssim \alpha^{-1}\bigl(H + \tfrac\beta\alpha m^{d/2} \tilde\theta^m \bigr) \|\rhs\|_{L^2(\varOmega)},\\
 \| u - \ucm \|_{L^2(\varOmega)}&\lesssim \alpha^{-2}\bigl(H + \tfrac\beta\alpha m^{d/2} \tilde\theta^m \bigr)^2 \|\rhs\|_{L^2(\varOmega)},
\end{align*}
with some generic constant $0<\tilde\theta = \theta^{\lceil \hat H/H\rceil}<1$ and $\theta$ depending on the contrast $\frac\beta\alpha$ (cf. Lemma \ref{HMP::lma:decay} and \ref{HMP::lma:localization-error} below).
\end{theorem}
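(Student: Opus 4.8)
The plan is to pass from the ideal estimate of Theorem~\ref{HMP::thm:global-basis-error} to the localized estimate in two stages, introducing the missing lemmas (a decay estimate and a localization error estimate) along the way. First I would quantify how fast the ideal corrector $Q^\varOmega_\hj(\vc)$ decays away from the patch $\homega_\hj$: on the complement of $\homega_\hj^m$ it should be exponentially small, with rate $\theta<1$ depending on $\beta/\alpha$ (this is Lemma~\ref{HMP::lma:decay}). The standard argument is to test the corrector equation against $\eta^2 Q^\varOmega_\hj(\vc)$ for a suitable Lipschitz cutoff $\eta$ that equals $1$ outside $\homega_\hj^{\ell+1}$ and $0$ inside $\homega_\hj^{\ell}$, using the coercivity/continuity of $A$, the product rule, and the quasi-interpolation property \textbf{(I3)} to absorb the cross term; iterating over the nested shells $\homega_\hj^{\ell}$ gives a Caccioppoli-type recursion that yields the factor $\theta^{m}$ after $\sim m/\lceil\hat H/H\rceil$ steps, hence $\tilde\theta^m$ with $\tilde\theta=\theta^{\lceil\hat H/H\rceil}$.

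Second I would estimate the difference between the localized and ideal correctors, $\|A^{1/2}\nabla(Q^\varOmega(\vc)-Q^m(\vc))\|_{L^2(\varOmega)}$ (this is Lemma~\ref{HMP::lma:localization-error}). The natural route is to bound each $\|A^{1/2}\nabla(Q^\varOmega_\hj(\vc)-Q^m_\hj(\vc))\|$ by the Galerkin best-approximation of $Q^\varOmega_\hj(\vc)$ in $\Vf(\homega_\hj^m)$; taking the test function to be $Q^\varOmega_\hj(\vc)$ truncated by a cutoff $\eta$ supported in $\homega_\hj^m$ reduces everything to the tail energy $\|A^{1/2}\nabla Q^\varOmega_\hj(\vc)\|_{L^2(\varOmega\setminus\homega_\hj^{m-1})}$ plus a cutoff cross term, both controlled by the decay lemma. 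Summing over $\hj$ with finite-overlap bookkeeping (here the $m^{d/2}$ appears, since an extension patch $\homega_\hj^m$ overlaps only $O(m^d)$ others) and using $\|\nabla\vc\|\lesssim\|A^{1/2}\nabla\vc\|\cdot\alpha^{-1/2}$ gives $\|A^{1/2}\nabla(Q^\varOmega-Q^m)(\vc)\|\lesssim \beta^{1/2} m^{d/2}\tilde\theta^m\|\nabla\vc\|$.

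With these two lemmas, the $H^1$ estimate follows by a perturbation/Strang-type argument: the exact solution $u=\uco+e$ with $\|A^{1/2}\nabla e\|\lesssim\alpha^{-1/2}H\|g\|$ by Theorem~\ref{HMP::thm:global-basis-error}, and $\uco-\ucm$ is controlled by the consistency error between the bilinear forms restricted to $\Vco$ and $\Vcm$, which is exactly the localization error applied to the modified partition-of-unity basis functions; quantitatively one writes $\|A^{1/2}\nabla(\uco-\ucm)\|\lesssim \|A^{1/2}\nabla(Q^\varOmega-Q^m)\big((I|_{\Vc})^{-1}\uco\big)\|$ up to coercivity constants, and bounds $\|\nabla (I|_{\Vc})^{-1}\uco\|$ by $\|\nabla\uco\|\lesssim\alpha^{-1}\|g\|$ via \textbf{(I2)} and the ideal estimate. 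Collecting the $\alpha$, $\beta$ powers yields the factor $\alpha^{-1}(H+\tfrac\beta\alpha m^{d/2}\tilde\theta^m)$. The $L^2$ estimate is then a standard Aubin--Nitsche duality argument: test the error equation with the solution of the adjoint problem (which has the same form since $A$ is symmetric), apply the already-proven $H^1$ bound to the discrete approximation of the dual solution, and square the rate, picking up one extra factor of $\alpha^{-1}$ for each of the primal and dual problems, hence $\alpha^{-2}(\cdots)^2$.

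The main obstacle is the decay lemma (Lemma~\ref{HMP::lma:decay}): getting a clean exponential rate $\theta<1$ that is uniform in the patch sizes and in $m$ requires carefully choosing the cutoff shells at the scale $\lceil\hat H/H\rceil H$ (so that \textbf{(I3)} can be invoked on each shell with a mesh-independent constant), and then showing the Caccioppoli recursion contracts with a ratio $\theta$ that depends only on $\beta/\alpha$ and the interpolation constants — the delicate point being to keep the constant in front of $\tilde\theta^m$ polynomial in $m$ (the $m^{d/2}$) rather than exponential, which forces the finite-overlap counting of extension patches to be done with care.
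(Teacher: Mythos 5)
Your top-level assembly of the theorem is exactly the paper's route: quasi-optimality of the Galerkin solution in $\Vcm/\mathbb R$ with the comparison function $\uc+Q^m\uc$ where $\uc=(I|_{\Vc})^{-1}I(\uco)$, a triangle inequality splitting the error into the ideal error (Theorem~\ref{HMP::thm:global-basis-error}) plus the localization error $\|\nabla(Q^\varOmega\uc-Q^m\uc)\|$, the stability chain $\sum_{\hj}\|\nabla Q^\varOmega_\hj\uc\|^2\lesssim\|\nabla\uc\|^2\lesssim\|\nabla\uco\|^2\lesssim\alpha^{-2}\|\rhs\|^2$ via \textbf{(I1)}--\textbf{(I2)}, and Aubin--Nitsche for the $L^2$ bound. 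The two supporting lemmas you identify (exponential decay, localization error with the $m^{d/2}$ overlap count) are precisely Lemmas~\ref{HMP::lma:decay} and~\ref{HMP::lma:localization-error}, and your Galerkin-best-approximation-plus-truncation strategy for the latter is the paper's.

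There is, however, one genuine gap in your sketch of the decay lemma, and it is the central technical obstacle of the whole construction rather than the rate bookkeeping you single out: the corrector equation is posed over the constraint space $\Vf=\ker I$, and $\Vf$ is \emph{not} invariant under multiplication by cut-off functions. Hence $\eta^2 Q^\varOmega_\hj(\vc)$ is not an admissible test function in \eqref{HMP::generalized-corrector-problem}, and likewise the truncation $(1-\eta)Q^\varOmega_\hj(\vc)$ is not an admissible comparison element of $\Vf(\homega_\hj^m)$ in the best-approximation step. The paper repairs this with Lemma~\ref{HMP::lma:cutoff}: given $w\in\Vf$ and a cut-off $\eta_j^{k,\ell}$, property \textbf{(I4)} produces a corrector $v$ with $I(v)=I(\eta_j^{k,\ell}w)$, controlled energy, and support inflated by only $\kappa$ layers (this is what the quasi-inclusion $\subsetsim[\kappa]$ and the $\kappa$-shifts in all the patch indices are for), so that $\tilde w=\eta_j^{k,\ell}w-v$ lies in $\Vf$ and is close to $\eta_j^{k,\ell}w$ up to $\ell^{-1}\|\nabla w\|$ on an annulus. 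Without this step the Caccioppoli recursion does not close: you cannot invoke $a(p,w)=F(w)=0$ for the cut-off test function, and the cross terms you propose to ``absorb via \textbf{(I3)}'' never become estimable. Your sketch nowhere uses \textbf{(I4)}, which is the assumption introduced precisely for this purpose, so this needs to be added for the argument to go through.
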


\begin{proof}
Let $\uco$ be the solution of the ideal problem with correction operator $Q^\varOmega$, and $\uc\in\Vc/\mathbb R$ such that $\uco=\uc+Q^\varOmega\uc$. As a consequence of \textbf{(I3)}, all functions $v\in\Vf$ satisfy $\int_\varOmega v\de x=0$. With $Q^m\vc\in\Vf$, we get 
\begin{align*}
\|\nabla u-\nabla \ucm\|_{L^2(\varOmega)}
&\lesssim \min_{\vc^m\in\Vcm/\mathbb R} \left\|\nabla u - \nabla\vc^m \right\|_{L^2(\varOmega)}\\
&\le \left\|\nabla u - \nabla(\uc + Q^m\uc) \right\|_{L^2(\varOmega)}\\
&\le \left\|\nabla u - \nabla\uco \right\|_{L^2(\varOmega)} + \left\| \nabla Q^\varOmega\uc - \nabla Q^m\uc \right\|_{L^2(\varOmega)}.
\end{align*}
Lemma~\ref{HMP::lma:localization-error} will quantify the localization error
$$\left\| \nabla Q^\varOmega\uc - \nabla Q^m\uc \right\|_{L^2(\varOmega)}\lesssim\tfrac\beta\alpha m^{d/2} \tilde\theta^m \Bigl( \sum_{\smash{\hj\in\hJ}} \|\nabla Q^\varOmega_\hj\uc \|_{L^2(\varOmega)}^2  \Bigr)^{1/2}.$$
This, Theorem~\ref{HMP::thm:global-basis-error} and the estimates
\begin{align*}
\sum_{\smash{\hj\in\hJ}} \|\nabla Q^\varOmega_\hj\uc \|_{L^2(\varOmega)}^2 &\stackrel{\eqref{HMP::local-corrector-problem}}\lesssim \sum_{\smash{\hj\in\hJ}}  \|{\hphi}_\hj \nabla \uc \|_{L^2(\varOmega)}^2 \le  \|\nabla \uc \|_{L^2(\varOmega)}^2 \\
&=\bigl\|\nabla\big(I|_\Vc\big)^{-1}I(\uco)\bigr\|_{L^2(\Omega)}\stackrel{\textbf{(I1)},\textbf{(I2)}}\lesssim \|\nabla \uco \|_{L^2(\varOmega)}^2
\le \alpha^{-1}C_\mathrm{Poinc}(\varOmega)\,\|\rhs\|_{L^2(\varOmega)}^2
\end{align*}
yield the $H^1$-error estimate. The $L^2$-error estimate is obtained by a standard Aubin-Nitsche argument.
\qedsquare\end{proof}
To prove Lemma~\ref{HMP::lma:localization-error}, several tools are needed in addition to the preceding results. They will be discussed below.
\begin{lemma}[quasi-inclusion of intersecting patches]\label{HMP::lma:set-comp}
Let $i,j\in\J$ and $\ell,k,m \in \mathbb{N}$ with $k\ge \ell \ge 2$. Then
\begin{displaymath}
\text{if}\quad\omega_i^m \cap \left( \omega_j^k \setminus \omega_j^\ell \right)\neq \emptyset \quad \text{then}\quad \omega_i \subseteq \omega_j^{k+m+1} \setminus \omega_j^{\ell-m-1}.
\end{displaymath}
\end{lemma}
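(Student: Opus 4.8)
This is a purely set-theoretic statement about the geometry of the extension patches, so the plan is to unfold the definition of the extension patch from Definition~\ref{definition:extensionpatch}, namely that $\omega_j^k = \{x\in\barOmega\mid \dist(x,\omega_j)\le k\cdot H\}$, and to chase the corresponding distance inequalities. First I would fix a point $z$ in the nonempty intersection $\omega_i^m \cap (\omega_j^k\setminus\omega_j^\ell)$. From $z\in\omega_i^m$ we get $\dist(z,\omega_i)\le m\cdot H$, and from $z\in\omega_j^k\setminus\omega_j^\ell$ we get the two-sided bound $\ell\cdot H < \dist(z,\omega_j)\le k\cdot H$.

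Next I would bound $\dist(x,\omega_j)$ for an arbitrary $x\in\omega_i$. Since $x\in\omega_i$ and $\dist(z,\omega_i)\le m\cdot H$, a point of $\omega_i$ witnessing that distance lies within $m\cdot H$ of $z$, so by the triangle inequality for the set distance $\|x-z\|\le \diam(\omega_i)+\dist(z,\omega_i)$; here is where I would use $\diam(\omega_i)=H_i\le H$, giving $\|x-z\|\le (m+1)H$. Then for the upper bound: $\dist(x,\omega_j)\le \|x-z\|+\dist(z,\omega_j)\le (m+1)H + k\cdot H = (k+m+1)H$, hence $x\in\omega_j^{k+m+1}$. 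For the lower bound: $\dist(z,\omega_j)\le \|x-z\|+\dist(x,\omega_j)$, so $\dist(x,\omega_j)\ge \dist(z,\omega_j)-\|x-z\| > \ell\cdot H-(m+1)H = (\ell-m-1)H$, which says precisely that $x\notin\omega_j^{\ell-m-1}$ (using $\ell-m-1\in\mathbb{Z}$; if $\ell-m-1\le 0$ the set $\omega_j^{\ell-m-1}$ is empty or undefined and the conclusion is vacuous there, so one may read $\omega_j^{\ell-m-1}$ as the empty set in that case). Combining, $x\in\omega_j^{k+m+1}\setminus\omega_j^{\ell-m-1}$, and since $x\in\omega_i$ was arbitrary this gives $\omega_i\subseteq\omega_j^{k+m+1}\setminus\omega_j^{\ell-m-1}$.

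The only subtle point — and the one I would treat carefully — is the ``diameter jump'' step $\|x-z\|\le \diam(\omega_i)+\dist(z,\omega_i)\le (m+1)H$: one must make sure the single extra $+1$ in the exponent genuinely absorbs moving across all of $\omega_i$, which works because $\diam(\omega_i)\le H$ by the definition of $H$ in (PU~1). The hypotheses $k\ge\ell\ge 2$ are not really needed for the distance arithmetic itself; they serve to guarantee that the annulus $\omega_j^k\setminus\omega_j^\ell$ and the resulting annulus $\omega_j^{k+m+1}\setminus\omega_j^{\ell-m-1}$ are the ``honest'' annuli intended in the later localization argument (in particular $\ell\ge 2$ keeps $\omega_j^\ell$ strictly larger than $\omega_j^1$, which matters when this lemma is invoked in the proof of Lemma~\ref{HMP::lma:localization-error}), so I would mention this but not lean on it. Overall I expect no real obstacle; the entire proof is three applications of the triangle inequality for set distance plus the bound $\diam(\omega_i)\le H$.
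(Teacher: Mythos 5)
Your proof is correct and is essentially the paper's own argument: the paper also fixes a point in the nonempty intersection, observes $\omega_i\subseteq\overline{B_{(m+1)H}(x)}$ via $\dist(x,\omega_i)\le mH$ and $\diam(\omega_i)\le H$, and concludes by the same distance bounds relative to $\omega_j$. You merely spell out the triangle inequalities that the paper leaves implicit.
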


\begin{proof}
Consider $x\in\omega_i^m \cap \left( \omega_j^k \setminus \omega_j^\ell \right)$ and observe
$$ \omega_i\enspace \subseteq \enspace\overline{ B_{(m+1)H}(x)} \enspace\subseteq \enspace\omega_j^{k+m+1} \setminus \omega_j^{\ell-m-1}.$$
\raiseqedsquare\end{proof}

\begin{definition}[cut-off functions]\label{HMP::def:cutoff}
For all $j\in\J$ and $\ell,k\in\mathbb{N}$ with $k > \ell$, we define the \emph{cut-off function}
\begin{displaymath}
\eta_j^{k,\ell}(x) = \frac{\dist(x,\omega_j^{k-\ell})}{\dist(x,\omega_j^{k-\ell})+\dist(x,\varOmega\setminus\omega_j^k)}.
\end{displaymath}
For $\varOmega\setminus\omega_j^k=\emptyset$, we set $\eta_j^{k,\ell}\equiv0$.
Note that $\eta_j^{k,\ell}=0$ in $\omega_j^{k-\ell}$ and $\eta_j^{k,\ell}=1$ in $\varOmega\setminus\omega_j^k$. Moreover, $\eta_j^{k,\ell}$ is bounded between 0 and 1 and Lipschitz continuous with
\begin{equation}\label{HMP::e:cutoffH}
 \bigl\| \nabla\eta_j^{k,\ell} \bigr\|_{L^{\infty}(\varOmega)}\leq\frac 1{\ell\,H}.
\end{equation}
See \cite[Theorem 8.5]{Alt2006} for existence and boundedness of the weak derivative of Lipschitz-continuous functions.
\end{definition}
\begin{remark} The Lipschitz bound is shown as follows. For $x\in\mathbb R^d$ we have the triangle inequality
$$\dist(x,\omega_j^{k-\ell}) + \dist(x,\varOmega\setminus\omega_j^k)\enspace\geq\enspace\dist(\omega_j^{k-\ell},\varOmega\setminus\omega_j^k)\enspace=\enspace\ell\,H .$$
Moreover, any nonemtpy set $B$ in a metric space satisfies Lipschitz continuity of the distance function $\dist(\,\cdot\,,B)$ in the sense
$$\left|\dist(x,B)-\dist(y,B)\right|\enspace\leq\enspace \dist(x,y)\quad\text{for }x,y\in\mathbb R^d.$$
Altogether,
\begin{align*}
 \frac{\bigl|\eta_j^{k,\ell}(x)-\eta_j^{k,\ell}(y)\bigr|}{\dist(x,y)}
 &\leq\frac1{\dist(x,y)}\cdot\frac{\bigl|\dist(x,\omega_j^{k-\ell})-\dist(y,\omega_j^{k-\ell})\bigr|}{\ell\,H}\\
 &\leq \frac1{\ell\,H}.
\end{align*}
\end{remark}
A technical issue in our error analysis is that $\Vf$ is not invariant under multiplication by such cut-off functions. However, the product $\eta_j^{k,\ell}w$ for $w\in\Vf$ is close to $\Vf$ in the following sense.
\begin{lemma}[quasi-invariance of $\Vf$ under multiplication by cut-off functions]\label{HMP::lma:cutoff}
Recall $\kappa$ from \textbf{(I4)}. For any given $w\in \Vf$ and cutoff function $\eta_j^{k,\ell}$ with $k>\ell>0$, there exists $\tilde w\in \Vf(\varOmega \setminus \omega_j^{k-\ell-\kappa-2}) \subseteq \Vf$ such that
\begin{displaymath}\label{HMP::lemma-a-1-eq}
\|\nabla(\eta_j^{k,\ell}w-\tilde w)\|_{L^2(\varOmega)}\lesssim \ell^{-1} \|\nabla w\|_{L^2(\omega_j^{k+2} \setminus \omega_j^{k-\ell-2})}.
\end{displaymath}
\end{lemma}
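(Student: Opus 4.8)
The plan is to build $\tilde w$ by correcting the naive product $\eta_j^{k,\ell}w$ back into $\Vf$ using property \textbf{(I4)}, and then to estimate the gradient of the correction. First I would observe that the cut-off $\eta_j^{k,\ell}$ is identically $0$ on $\omega_j^{k-\ell}$ and identically $1$ outside $\omega_j^k$, so the product $\eta_j^{k,\ell}w$ coincides with $w$ away from $\omega_j^k$ and vanishes on $\omega_j^{k-\ell}$; in particular $\eta_j^{k,\ell}w\in V$ and its ``defect from $\Vf$'' is concentrated in the annulus $\omega_j^k\setminus\omega_j^{k-\ell}$. Concretely, $I(\eta_j^{k,\ell}w)\in\Vc$ is a coarse function whose support is quasi-included in that annulus: since $I$ is built from the partition of unity and is local (each $\varphi_i$ sees only $\omega_i$, and $\omega_i^1$ via \textbf{(I3)}), only indices $i$ with $\omega_i$ meeting the annulus contribute, and by Lemma~\ref{HMP::lma:set-comp} (applied with $m=0$, and using $k-\ell\ge$ something like $2$, which we may assume since otherwise the statement is vacuous after enlarging constants) those $\omega_i$ lie in $\omega_j^{k+1}\setminus\omega_j^{k-\ell-1}$.

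Next I would apply \textbf{(I4)} to $\vc\ceq I(\eta_j^{k,\ell}w)\in\Vc$ to obtain $z\in V$ with $I(z)=\vc$, with $\|\nabla z\|_{L^2(\varOmega)}\le C_3\|\nabla\vc\|_{L^2(\varOmega)}$ and $\supp(z)\subsetsim[\kappa]\supp(\vc)$. By the quasi-inclusion just established, $\supp(\vc)\subseteq\omega_j^{k+1}\setminus\omega_j^{k-\ell-1}$, hence $\supp(z)\subseteq\omega_j^{k+1+\kappa}\setminus\omega_j^{k-\ell-1-\kappa}$ (the outer enlargement is harmless, the inner one shrinks by $\kappa$). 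Then I set $\tilde w\ceq \eta_j^{k,\ell}w-z$. By construction $I(\tilde w)=I(\eta_j^{k,\ell}w)-\vc=0$, so $\tilde w\in\Vf$, and since both $\eta_j^{k,\ell}w$ (which vanishes on $\omega_j^{k-\ell}$) and $z$ vanish on $\omega_j^{k-\ell-\kappa-2}$ — here I use $k-\ell-1-\kappa\ge k-\ell-\kappa-2$, so $z$ is even zero on the slightly larger set $\omega_j^{k-\ell-\kappa-1}\supseteq\omega_j^{k-\ell-\kappa-2}$ — we get $\tilde w\in\Vf(\varOmega\setminus\omega_j^{k-\ell-\kappa-2})$ as claimed.

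It remains to estimate $\|\nabla(\eta_j^{k,\ell}w-\tilde w)\|_{L^2(\varOmega)}=\|\nabla z\|_{L^2(\varOmega)}\le C_3\|\nabla\vc\|_{L^2(\varOmega)}$. Here I would use the local stability of $I$ from \textbf{(I3)}: $\vc$ is supported in the annulus, so $\|\nabla\vc\|_{L^2(\varOmega)}^2\lesssim\sum_i\|\nabla I(\eta_j^{k,\ell}w)\|_{L^2(\omega_i)}^2\lesssim\sum_i\|\nabla(\eta_j^{k,\ell}w)\|_{L^2(\omega_i^1)}^2$, the sum running over the (boundedly many overlapping) patches $\omega_i$ meeting $\omega_j^k\setminus\omega_j^{k-\ell-1}$, whose union $\omega_i^1$ is contained in $\omega_j^{k+2}\setminus\omega_j^{k-\ell-2}$. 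By finite overlap this gives $\|\nabla\vc\|_{L^2(\varOmega)}\lesssim\|\nabla(\eta_j^{k,\ell}w)\|_{L^2(\omega_j^{k+2}\setminus\omega_j^{k-\ell-2})}$. Finally I expand $\nabla(\eta_j^{k,\ell}w)=\eta_j^{k,\ell}\nabla w+w\nabla\eta_j^{k,\ell}$ and bound the two terms: the first by $\|\nabla w\|_{L^2(\omega_j^{k+2}\setminus\omega_j^{k-\ell-2})}$ since $0\le\eta_j^{k,\ell}\le1$, and for the second I use the Lipschitz bound \eqref{HMP::e:cutoffH}, namely $\|\nabla\eta_j^{k,\ell}\|_{L^\infty}\le(\ell H)^{-1}$, together with a local Poincar\'e/Friedrichs inequality on the annulus to convert $\|w\|_{L^2}$ into $\lesssim\ell H\|\nabla w\|_{L^2}$ on a slightly enlarged annulus — crucially the Poincar\'e constant on a ``tube'' of width $\mathcal O(\ell H)$ scales like $\ell H$ in the direction transverse to the cut-off, since $\nabla\eta_j^{k,\ell}$ points across the annulus and $w$ can be assumed mean-zero there (or one covers the annulus by balls of radius $\mathcal O(H)$ and sums, which only costs a bounded overlap factor because $w$ has zero mean only globally — this point needs care). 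The two factors of $\ell H$ cancel against $(\ell H)^{-1}$, leaving the claimed $\ell^{-1}$. The main obstacle I anticipate is precisely this last Poincar\'e argument on the annulus: getting the clean $\ell^{-1}$ rather than, say, $\ell^{-1/2}$ or a constant, requires exploiting that $\nabla\eta$ is supported in a width-$\ell H$ band and is itself $\mathcal O((\ell H)^{-1})$, so that $\|w\nabla\eta\|_{L^2(\text{band})}\lesssim(\ell H)^{-1}\,\ell H\,\|\nabla w\|_{L^2(\text{slightly bigger band})}$, and one must organize the covering of the band by $\mathcal O(H)$-balls carefully so that the Poincar\'e constants stay $\mathcal O(H)$ and the overlaps stay bounded.
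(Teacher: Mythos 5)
Your construction of $\tilde w$ is exactly the paper's: apply \textbf{(I4)} to $I(\eta_j^{k,\ell}w)$, subtract the resulting $z$, and track supports via the quasi-inclusion calculus. The support bookkeeping is fine. The gap is in the gradient estimate, and it is fatal for the stated bound: after reducing to $\|\nabla I(\eta_j^{k,\ell}w)\|_{L^2(\omega_i)}\lesssim\|\nabla(\eta_j^{k,\ell}w)\|_{L^2(\omega_i^1)}$ you expand $\nabla(\eta_j^{k,\ell}w)=\eta_j^{k,\ell}\nabla w+w\nabla\eta_j^{k,\ell}$ and bound the first term using only $0\le\eta_j^{k,\ell}\le1$. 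That gives $\|\nabla w\|_{L^2(\text{annulus})}$ with constant $1$, not $\ell^{-1}\|\nabla w\|_{L^2(\text{annulus})}$; no subsequent manipulation of the second term can recover the missing factor. The $\ell^{-1}$ is not cosmetic — it is precisely what makes the iteration in Lemma~\ref{HMP::lma:decay} contract (one chooses $\ell\sim\beta/\alpha$ so that $C\ell^{-1}\beta/\alpha\le e^{-1}$), so a constant-order bound renders the whole decay argument void.

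The missing idea is to use $I(w)=0$ \emph{before} applying the local stability, patch by patch. Since $I$ is linear and $I(w)=0$, one has $I(\eta_\ell w)=I\bigl((\eta_\ell-c_i^\ell)w\bigr)$ for any constant $c_i^\ell$, and the paper takes $c_i^\ell$ to be the mean of $\eta_\ell$ over $\omega_i^1$. Then \textbf{(I3)} gives $\|\nabla I(\eta_\ell w)\|_{L^2(\omega_i)}\lesssim\|\nabla((\eta_\ell-c_i^\ell)w)\|_{L^2(\omega_i^1)}$, and in the expansion the troublesome term becomes $(\eta_\ell-c_i^\ell)\nabla w$ with $\|\eta_\ell-c_i^\ell\|_{L^\infty(\omega_i^1)}\lesssim H\|\nabla\eta_\ell\|_{L^\infty}\le\ell^{-1}$ — this is where the $\ell^{-1}$ comes from. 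The same identity resolves the local Poincar\'e issue you correctly flagged for the $w\nabla\eta_\ell$ term: you do not need $w$ to have zero mean on each ball, because $I(w)=0$ is itself a collection of local conditions, so \textbf{(I3)} applied to $w=w-I(w)$ gives $\|w\|_{L^2(\omega_i)}\le C_1H_i\|\nabla w\|_{L^2(\omega_i^1)}$ on every patch; combined with $\|\nabla\eta_\ell\|_{L^\infty}\le(\ell H)^{-1}$ this yields $\ell^{-1}$ for that term as well (your heuristic of a Poincar\'e constant $\sim\ell H$ on the full annulus would in fact only give a constant-order bound, since $(\ell H)^{-1}\cdot\ell H=1$). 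With these two uses of $I(w)=0$ your outline becomes the paper's proof.
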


\begin{proof}
We fix the $j \in\J$ and $k\in\mathbb{N}$ and denote $\eta_\ell\ceq\eta_j^{k,\ell}$ and $\textstyle c_i^{\ell} \ceq \frac1{|\omega_i^1|}\int_{\omega_i^1} \eta_\ell \de x$ for $i\in \J$. The property \textbf{(I4)}, applied to $I(\eta_\ell w)\in\Vf$, yields $v\in V$ with
\begin{gather}\label{HMP::e:lackproj}
I(v) =I (\eta_\ell w),\;\|\nabla v\|_{L^2(\varOmega)}\lesssim \|\nabla I \eta_\ell w\|_{L^2(\varOmega)},\\
\text{and}\notag\quad
\supp(v)\subsetsim[\kappa] \supp \bigl(I(\eta_\ell w)\bigr)\subsetsim[1]\supp(\eta_\ell w) \subseteq \varOmega \setminus \omega_j^{k-\ell},\\
\text{which yields}\quad \supp(v)\subsetsim[\kappa+1]\varOmega\setminus\omega_j^{k-\ell}\Rightarrow\supp(v)\subseteq\varOmega\setminus\omega_j^{k-\ell-\kappa-2}.
\end{gather}
Note that $\supp \bigl(I(\eta_\ell w)\bigr)\subsetsim[1]\supp(\eta_\ell w)$ is a consequence of \textbf{(I3)},
and that \eqref{HMP::e:lackproj} implies $I(v- \eta_\ell w)=0$.

We define $\tilde w\ceq\eta_\ell w-v\in \Vf(\varOmega \setminus \omega_j^{k-\ell-\kappa-2})$.
Using $I(w)=0$, we obtain for any $i\in \J$
\begin{equation}
\label{HMP::loc-lagrange-stability}\left\|\nabla I (\eta_\ell w)\right\|_{L^2(\omega_i)} \Stackrel{\textbf{(I1)}}= \left\|\nabla I ((\eta_\ell -c_i^{\ell})w)\right\|_{L^2(\omega_i)} \Stackrel{\textbf{(I3)}}\lesssim 
\left\|\nabla ((\eta_\ell -c_i^{\ell})w)\right\|_{L^2(\omega_i^1)}.
\end{equation}
This gives us
\begin{align*}
\|\nabla I (\eta_\ell w)\|_{L^2(\varOmega)}^2 &\leq {\sum_{i\in\J}}\| \nabla I (\eta_\ell w)\|_{L^2(\omega_i)}^2\\
&\Stackrel{\eqref{HMP::loc-lagrange-stability}}\lesssim\,
\sum_{i\in\J} \bigl\|\nabla\bigl((\eta_\ell- c_i^{\ell})w\bigr)\bigr\|_{L^2(\omega_i^1)}^2 \\
&=\sumbox{\sum_{\substack{i\in\J: \\ \omega_i^1\cap (\omega_j^k \setminus \omega_j^{k-\ell}) \neq \emptyset}}}
\bigl\|\nabla\bigl(\bigl(\eta_\ell- c_i^{\ell} \bigr)w\bigr)\bigr\|_{L^2(\omega_i^1)}^2\\
&\Stackrel{\eqref{HMP::lma:set-comp}}\leq \sumbox{\sum_{\substack{i\in\J: \\ \omega_i \subseteq \omega_j^{k+2} \setminus \omega_j^{k-\ell-2}}}}\bigl\|\nabla\bigl(\bigl(\eta_\ell- c_i^{\ell} \bigr)w\bigr)\bigr\|_{L^2(\omega_i^1)}^2\\
&\lesssim\sumbox{\sum_{\substack{i\in\J: \\ \omega_i \subseteq \omega_j^{k+2} \setminus \omega_j^{k-\ell-2}}}}
\bigl\| (\nabla \eta_\ell)( w - I w) \bigr\|_{L^2(\omega_i^1)}^2 + \bigl\| \bigl(\eta_\ell- c_i^{\ell} \bigr)\nabla w \bigr\|_{L^2(\omega_i^1)}^2.
\end{align*}
Since $\nabla\eta_\ell\neq0$ only in $\omega_j^k \setminus \omega_j^{k-\ell}$ and $\bigl.(\eta_\ell-c_i^\ell)\bigr|_{\omega_i^1}\neq0$ only if $\omega_i^1$ intersects with $\omega_j^k \setminus \omega_j^{k-\ell}$, we have
\begin{align}
\notag&\lesssim \sumbox{\sum_{\substack{i\in\J: \\ \omega_i \subseteq\omega_j^{k+1} \setminus \omega_j^{k-\ell-1}}}} \bigl\| (\nabla \eta_\ell)( w - I w) \bigr\|_{L^2(\omega_i)}^2 +\sumbox{\sum_{\substack{i\in\J: \\ \omega_i \subseteq \omega_j^{k+1} \setminus \omega_j^{k-\ell-1}}}} \bigl\| \bigl(\eta_\ell- c_i^{\ell} \bigr)\nabla w \bigr\|_{L^2(\omega_i^1)}^2 \\
\notag&\lesssim H^2 \|\nabla \eta_\ell\|^2_{L^\infty(\varOmega)} \| \nabla w  \|_{L^2(
\omega_j^{k+1} \setminus \omega_j^{k-\ell-1}
)}^2 
\\\notag&\hspace{4cm} + \sumbox{\sum_{\substack{i\in\J: \\ \omega_i \subseteq \omega_j^{k+1} \setminus \omega_j^{k-\ell-1}}}}
\bigl\| \bigl(\eta_\ell- c_i^{\ell} \bigr)\nabla w \bigr\|_{L^2(\omega_i^1)}^2\\
\label{HMP::e:ferlg}&\Stackrel{\eqref{HMP::e:cutoffH}}\leq\ell^{-2} \bigl\| \nabla w  \bigr\|_{L^2(
\omega_j^{k+2} \setminus \omega_j^{k-\ell-2}
)}^2,
\end{align}
where we used the Lipschitz bound $\| \eta_\ell- c_i^{\ell} \|_{L^{\infty}(\omega_i^1)} \lesssim H \left\| \nabla \eta_\ell \right\|_{L^{\infty}(\omega_i^1)}$.
The combination of \eqref{HMP::e:lackproj} and \eqref{HMP::e:ferlg} readily yields the assertion,
\begin{align*}
\left\|\nabla(\eta_\ell w-\tilde w)\right\|_{L^2(\varOmega)}^2 &= \left\|\nabla v\right\|_{L^2(\varOmega)}^2 \stackrel{\eqref{HMP::e:lackproj}}\le \left\|\nabla I (\eta_\ell w)\right\|_{L^2(\varOmega)}^2\\
&\Stackrel{\eqref{HMP::e:ferlg}}\lesssim  \ell^{-2}\left\|\nabla w\right\|^2_{L^2(
\omega_j^{k+2} \setminus \omega_j^{k-\ell-2}
)}.
\end{align*}
\raiseqedsquare\end{proof}

A key result is the following.
\begin{lemma}[Exponential decay in the fine scale space]\label{HMP::lma:decay}
Consider some fixed $j\in\J$ and let $F\in (\Vf)'$ satisfy $F(w)=0$ for all $w \in \Vf(\varOmega \setminus \omega_j^\varrho)$ with $\varrho\ceq\bigl\lceil\frac{\hat H}H\bigr\rceil$. Let $p \in \Vf$ be the solution of
\begin{equation}
\label{HMP::generalized-corrector-problem}a(p, w) =F(w) \qquad \text{for all } w \in \Vf.
\end{equation}
Then there exists $0<\theta<1$ depending on the contrast $\frac\beta\alpha$ such that for all positive $k\in\mathbb{N}$ it holds
\begin{displaymath}
\|\nabla p \|_{L^2(\varOmega\setminus \omega_j^k )}\lesssim \theta^k\left\|\nabla p \right\|_{L^2(\varOmega)}.
\end{displaymath}
\end{lemma}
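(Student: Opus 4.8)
The plan is a Caccioppoli-type argument combined with the Galerkin orthogonality encoded in \eqref{HMP::generalized-corrector-problem}, iterated over concentric shells around $\omega_j$. It suffices to prove the decay first in the energy norm $\|A^{1/2}\nabla\cdot\|_{L^2}$, passing to $\|\nabla\cdot\|_{L^2}$ only at the very end via the spectral bounds, and to treat $k$ above a fixed threshold (for bounded $k$ the claim is trivial since $\|\nabla p\|_{L^2(\varOmega\setminus\omega_j^k)}\le\|\nabla p\|_{L^2(\varOmega)}$, up to adjusting the constant). Fix a number of layers $\ell\in\mathbb N$, to be chosen below in dependence on $\tfrac\beta\alpha$, and for $k\ge\varrho+\ell+\kappa+2$ consider the cut-off function $\eta\ceq\eta_j^{k,\ell}$ from Definition~\ref{HMP::def:cutoff}, which equals $1$ outside $\omega_j^k$, vanishes on $\omega_j^{k-\ell}$, and satisfies $\|\nabla\eta\|_{L^\infty(\varOmega)}\le(\ell H)^{-1}$. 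Since $0\le\eta\le1$ and the integrand is non-negative, $\|A^{1/2}\nabla p\|_{L^2(\varOmega\setminus\omega_j^k)}^2\le\int_\varOmega\eta\,A\nabla p\cdot\nabla p\de x = a(p,\eta p)-\int_\varOmega(A\nabla p)\cdot(p\nabla\eta)\de x$, and I would bound the two terms on the right separately.

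For the first term I would invoke Lemma~\ref{HMP::lma:cutoff} with $w=p$, which produces $\tilde w\in\Vf(\varOmega\setminus\omega_j^{k-\ell-\kappa-2})$ with $\|\nabla(\eta p-\tilde w)\|_{L^2(\varOmega)}\lesssim\ell^{-1}\|\nabla p\|_{L^2(\omega_j^{k+2}\setminus\omega_j^{k-\ell-2})}$. Because $k-\ell-\kappa-2\ge\varrho$, this $\tilde w$ is supported in $\varOmega\setminus\omega_j^\varrho$, hence $F(\tilde w)=0$ and therefore $a(p,\eta p)=a(p,\eta p-\tilde w)$; Cauchy--Schwarz (the integrand being supported in $\varOmega\setminus\omega_j^{k-\ell-\kappa-2}$), the quoted bound, and $\|A^{1/2}\nabla\cdot\|\le\beta^{1/2}\|\nabla\cdot\|$ give $a(p,\eta p)\lesssim(\tfrac\beta\alpha)^{1/2}\ell^{-1}\|A^{1/2}\nabla p\|_{L^2(\varOmega\setminus\omega_j^{k-\ell-\kappa-2})}^2$. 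For the second term, $\nabla\eta$ is supported in the shell $\omega_j^k\setminus\omega_j^{k-\ell}$; using $I(p)=0$ together with the first estimate in \textbf{(I3)} one has $\|p\|_{L^2(\omega_i)}\lesssim H\|\nabla p\|_{L^2(\omega_i^1)}$ for every patch $\omega_i$, and summing over the finitely-overlapping patches meeting the shell yields $\|p\nabla\eta\|_{L^2(\varOmega)}\lesssim\ell^{-1}\|\nabla p\|_{L^2(\omega_j^{k+1}\setminus\omega_j^{k-\ell-1})}$, so the second term is also $\lesssim\tfrac\beta\alpha\ell^{-1}\|A^{1/2}\nabla p\|_{L^2(\varOmega\setminus\omega_j^{k-\ell-\kappa-2})}^2$. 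Since $\tfrac\beta\alpha\ge1$, combining the two gives a single one-step estimate $\|A^{1/2}\nabla p\|_{L^2(\varOmega\setminus\omega_j^k)}^2\le C_0\,\tfrac\beta\alpha\,\ell^{-1}\,\|A^{1/2}\nabla p\|_{L^2(\varOmega\setminus\omega_j^{k-\ell-\kappa-2})}^2$ with $C_0$ independent of the contrast and patch sizes.

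To close the argument I would fix $\ell\ceq\lceil 2C_0\tfrac\beta\alpha\rceil$, so that the prefactor is $\le\tfrac12$, and set $m_0\ceq\ell+\kappa+2$; the one-step estimate then reads $\|A^{1/2}\nabla p\|_{L^2(\varOmega\setminus\omega_j^k)}^2\le\tfrac12\|A^{1/2}\nabla p\|_{L^2(\varOmega\setminus\omega_j^{k-m_0})}^2$ for $k\ge\varrho+m_0$, and iterating it about $(k-\varrho)/m_0$ times down to the full domain yields $\|A^{1/2}\nabla p\|_{L^2(\varOmega\setminus\omega_j^k)}\lesssim\theta^k\|A^{1/2}\nabla p\|_{L^2(\varOmega)}$ with $\theta\ceq 2^{-1/(4m_0)}\in(0,1)$ depending on the contrast through $m_0$; dividing by $\alpha^{1/2}$ and using $\|A^{1/2}\nabla p\|_{L^2(\varOmega)}\le\beta^{1/2}\|\nabla p\|_{L^2(\varOmega)}$ gives the stated inequality. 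The only genuinely delicate point is the first term: $\Vf$ is not invariant under multiplication by $\eta$, so one cannot directly use $\eta p$ as a test function in \eqref{HMP::generalized-corrector-problem}; this is exactly what Lemma~\ref{HMP::lma:cutoff} repairs, and the remaining work is careful layer-bookkeeping so that the region on the right-hand side of the one-step estimate still contains, by the fixed surplus $m_0$, the region on the left — which is what makes the geometric iteration close.
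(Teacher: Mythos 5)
Your argument is correct and follows essentially the same route as the paper: the same cut-off function $\eta_j^{k,\ell}$, the same appeal to Lemma~\ref{HMP::lma:cutoff} to produce a test function $\tilde w$ supported outside $\omega_j^{k-\ell-\kappa-2}\supseteq$ (no, $\subseteq$ complementwise) $\omega_j^\varrho$ so that $F(\tilde w)=0$, the same use of $I(p)=0$ with \textbf{(I3)} to absorb the $p\nabla\eta$ term, the same choice $\ell\sim\beta/\alpha$, and the same geometric iteration over shells of width $\ell+\kappa+2$. The only (harmless) differences are cosmetic: you work in the energy norm $\|A^{1/2}\nabla\cdot\|$ and convert at the end, and your shell indices in the $p\nabla\eta$ estimate are off by one extension layer, which is absorbed by the final region $\varOmega\setminus\omega_j^{k-\ell-\kappa-2}$.
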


\begin{proof}
We use a cut-off function as in the previous proof and denote $\eta_\ell\ceq\eta_j^{k,\ell}$ with $\ell\leq k-\varrho-\kappa-2$.

Applying Lemma~\ref{HMP::lma:cutoff} yields the existence of $\tilde{p}\in \Vf(\varOmega \setminus \omega_j^{k-\ell-\kappa-2})$ with the estimate $\|\nabla(\eta_\ell p -\tilde{p})\|_{L^2(\varOmega)}\lesssim \ell^{-1} \left\|\nabla p\right\|_{L^2(\omega_j^{k+2}\setminus\omega_j^{k-\ell-2})}$. Due to the property $\tilde{p}\in  \Vf(\varOmega \setminus \omega_j^{k-\ell-\kappa-2})$ and the assumptions on $F$ we also have
\begin{equation}
\label{HMP::lemm-4-4-step-1} \int_{\varOmega \setminus \omega_j^{k-\ell-\kappa-2}} A \nabla p \cdot \nabla \tilde{p}\de x =  \int_\varOmega A \nabla p \cdot \nabla \tilde{p}\de x = F ( \tilde{p} ) = 0.
\end{equation}
This leads to
\begin{align*}
\alpha\left\|\nabla p\right\|^2_{L^2(\varOmega\setminus\omega_j^k)}
&\leq\int_{\varOmega\setminus \omega_j^k}A\nabla p \cdot \nabla p\de x
\leq \int_{\varOmega\setminus \omega_j^{k-\ell-\kappa-2}}\eta_\ell A\nabla p \cdot \nabla p\de x\\
&= \int_{\varOmega\setminus \omega_j^{k-\ell-\kappa-2}} A\nabla p\cdot \left(\nabla (\eta_\ell p)-p\nabla\eta_\ell\right)\de x.
\intertext{With \eqref{HMP::lemm-4-4-step-1} and since $p\in\Vf$, this is}
&= \int_{\varOmega\setminus \omega_j^{k-\ell-\kappa-2}} A\nabla p\cdot \bigl(\nabla (\eta_\ell p-\tilde{p})-(p-I(p))\nabla\eta_\ell\bigr)\de x\\
&\lesssim \ell^{-1} \beta \Bigl(\|\nabla p \|_{L^2(\varOmega \setminus \omega_j^{k-\ell-\kappa-2})}^2 \\&\hspace{1cm}+ H^{-1} \|\nabla p \|_{L^2(\varOmega \setminus \omega_j^{k-\ell-\kappa-2})} \|p - I(p)\|_{L^2(\varOmega \setminus \omega_j^{k-\ell-\kappa-2})} \Bigr)\\
&\Stackrel{\textbf{(I3)}}\lesssim \ell^{-1} \beta \left\|\nabla p \right\|_{L^2(\varOmega \setminus \omega_j^{k-\ell-\kappa-2})}^2.
\end{align*}
Hence, there exists a constant $C$ independent of mesh size, contrast, number of patch extension layers, such that
\begin{equation}\label{HMP::eq:decay}
\|\nabla p \|_{L^2(\varOmega\setminus \omega_j^k)}^2\leq C \ell^{-1} \frac\beta\alpha \left\|\nabla p \right\|_{L^2(\varOmega\setminus \omega_j^{k-\ell-\kappa-2})}^2.
\end{equation}
Choose $\ell\ceq\lceil eC\frac\beta\alpha\rceil$ and observe that successive use of \eqref{HMP::eq:decay} yields
\begin{align*}
\|\nabla p \|_{L^2(\varOmega\setminus \omega_j^k)}^2
&\leq e^{-1} \left\|\nabla p \right\|_{L^2(\varOmega\setminus \omega_j^{k-\ell-\kappa-2})}^2\\
&\leq e^{-\lfloor\frac {k-\varrho}{\ell+\kappa+2}\rfloor} \left\|\nabla p \right\|_{L^2(\varOmega\setminus \omega_j^\varrho)}^2\\
&\lesssim e^{-\frac k{\ell+\kappa+2}} \left\|\nabla p \right\|_{L^2(\varOmega)}^2.
\end{align*}
The choice $\theta\ceq e^{-(\lceil eC\beta/\alpha\rceil +\kappa+2)^{-1}}$ concludes the proof.
\qedsquare\end{proof}

\begin{lemma}[localization error]
\label{HMP::lma:localization-error}
For $\uc\in\Vc$, the correction operators $Q^m$ and $Q^\varOmega$ satisfy
\begin{displaymath}
\bigl\| \nabla\bigl(Q^\varOmega\uc-Q^m\uc\bigr)\bigr\|_{L^2(\varOmega)} \lesssim \tfrac\beta\alpha m^{d/2}\tilde\theta^m \left\|Q^\varOmega \ucm \right\|_{L^2(\varOmega)}
\end{displaymath}
with $\tilde\theta\ceq\theta^{\lceil \hat H/H\rceil}<1$ and $\theta$ from Lemma~\ref{HMP::lma:decay}.
\end{lemma}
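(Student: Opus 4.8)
The plan is to compare, one index $\hj$ at a time, the global corrector $Q^\varOmega_\hj\uc$ (solving \eqref{HMP::local-corrector-problem} on all of $\varOmega$) with its localized counterpart $Q^m_\hj\uc$ (solving the same problem on $\homega_\hj^m$), show that their difference decays exponentially in $m$ via Lemma~\ref{HMP::lma:decay}, and then sum over $\hj\in\hJ$ controlling the overlap by a factor $m^{d/2}$. First I would fix $\hj$ and note that $Q^\varOmega_\hj\uc$ solves $a(Q^\varOmega_\hj\uc,w)=F_\hj(w)$ for all $w\in\Vf$, where $F_\hj(w)\ceq-\int_{\homega_\hj}\hphi_\hj A\nabla\uc\cdot\nabla w\de x$ is supported in $\homega_\hj$; since $\homega_\hj$ has diameter $\hat H$ it is contained in $\omega_{j_0}^\varrho$ for a suitable coarse index $j_0$ with $\varrho=\lceil\hat H/H\rceil$, so $F_\hj$ vanishes on $\Vf(\varOmega\setminus\omega_{j_0}^\varrho)$. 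Thus Lemma~\ref{HMP::lma:decay} applies to $p\ceq Q^\varOmega_\hj\uc$ and gives $\|\nabla Q^\varOmega_\hj\uc\|_{L^2(\varOmega\setminus\omega_{j_0}^k)}\lesssim\theta^k\|\nabla Q^\varOmega_\hj\uc\|_{L^2(\varOmega)}$ for every $k$.

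Next I would estimate $\|\nabla(Q^\varOmega_\hj\uc-Q^m_\hj\uc)\|_{L^2(\varOmega)}$ by a Galerkin/Céa-type argument in $\Vf(\homega_\hj^m)$: using the coercivity of $a$ on $\Vf$ and the fact that $Q^m_\hj\uc$ is the Galerkin projection of $Q^\varOmega_\hj\uc$ onto $\Vf(\homega_\hj^m)$ with respect to $a$, it suffices to bound $\|\nabla(Q^\varOmega_\hj\uc-\tilde p)\|$ for a well-chosen $\tilde p\in\Vf(\homega_\hj^m)$. Here I take the cut-off construction of Lemma~\ref{HMP::lma:cutoff}: with $\eta\ceq\eta_{j_0}^{k,\ell}$ for $k$ of order $m$ (precisely $k$ chosen so that $\omega_{j_0}^k$ stays inside $\homega_\hj^m$ after the $\kappa+2$ shrinkage), the function $(1-\eta)Q^\varOmega_\hj\uc$ is localized to $\omega_{j_0}^k$, and Lemma~\ref{HMP::lma:cutoff} supplies $\tilde p\in\Vf(\homega_\hj^m)$ with $\|\nabla((1-\eta)Q^\varOmega_\hj\uc-\tilde p)\|\lesssim\ell^{-1}\|\nabla Q^\varOmega_\hj\uc\|_{L^2(\text{annulus})}$, plus the truncation term $\|\nabla(\eta\,Q^\varOmega_\hj\uc)\|\lesssim\|\nabla Q^\varOmega_\hj\uc\|_{L^2(\varOmega\setminus\omega_{j_0}^{k-\ell})}+H^{-1}\ell^{-1}\|Q^\varOmega_\hj\uc\|_{L^2(\text{annulus})}$. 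Both are controlled by $\|\nabla Q^\varOmega_\hj\uc\|_{L^2(\varOmega\setminus\omega_{j_0}^{k'})}$ for $k'\sim m$ (using \textbf{(I3)} to absorb the $L^2$-term into the gradient), which by the decay estimate is $\lesssim\theta^{k'}\|\nabla Q^\varOmega_\hj\uc\|_{L^2(\varOmega)}\lesssim\tilde\theta^{m}\|\nabla Q^\varOmega_\hj\uc\|_{L^2(\varOmega)}$ after converting the unit of extension from $H$-layers to $\hat H$-layers, which is exactly where $\tilde\theta=\theta^{\lceil\hat H/H\rceil}$ enters.

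Finally I would assemble the global bound. Writing $Q^\varOmega\uc-Q^m\uc=\sum_{\hj\in\hJ}(Q^\varOmega_\hj\uc-Q^m_\hj\uc)$ and noting that the $\hj$-th summand is supported in $\homega_\hj^m$, the supports have bounded overlap of order $m^d$ (a point lies in $\homega_\hj^m$ for at most $O(m^d)$ indices $\hj$), so a Cauchy–Schwarz argument on the overlap gives $\|\nabla(Q^\varOmega\uc-Q^m\uc)\|_{L^2(\varOmega)}^2\lesssim m^d\sum_{\hj}\|\nabla(Q^\varOmega_\hj\uc-Q^m_\hj\uc)\|_{L^2(\varOmega)}^2\lesssim m^d\,\tfrac{\beta^2}{\alpha^2}\tilde\theta^{2m}\sum_{\hj}\|\nabla Q^\varOmega_\hj\uc\|_{L^2(\varOmega)}^2$, and taking square roots yields the claim (the factor $\beta/\alpha$ coming from coercivity/continuity of $a$ in the Céa step). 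I expect the main obstacle to be the bookkeeping of extension layers: one must carefully track how many $H$-layers are lost to the quasi-inclusion shifts ($\kappa+2$ in Lemma~\ref{HMP::lma:cutoff}), ensure $k-\ell-\kappa-2$ still exceeds $\varrho$ so Lemma~\ref{HMP::lma:decay} is applicable, and verify that all these $O(1)$ corrections are harmlessly absorbed into the generic constant and the exponent, leaving a clean $\tilde\theta^m$; the $m^{d/2}$ overlap count is routine by a volume argument but also needs the geometry of the $\homega_\hj^m$ to be made precise.
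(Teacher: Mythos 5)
Your per-patch estimate is exactly the paper's: fix $\hj$, pick a coarse index $j$ with $\homega_\hj\subseteq\omega_j^\varrho$, observe that $Q^m_\hj\uc$ is the Galerkin projection of $Q^\varOmega_\hj\uc$ onto $\Vf(\homega_\hj^m)$, insert the competitor built from $(1-\eta_j^{k,1})Q^\varOmega_\hj\uc$ corrected via \textbf{(I4)}, and invoke Lemma~\ref{HMP::lma:decay} to get $\|\nabla(Q^\varOmega_\hj\uc-Q^m_\hj\uc)\|_{L^2(\varOmega)}\lesssim\tilde\theta^m\|\nabla Q^\varOmega_\hj\uc\|_{L^2(\varOmega)}$. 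That part is fine, including the layer bookkeeping you flag as the main obstacle.

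The gap is in your assembly step. You justify
$\bigl\|\sum_{\hj}\nabla(Q^\varOmega_\hj\uc-Q^m_\hj\uc)\bigr\|^2\lesssim m^d\sum_{\hj}\|\nabla(Q^\varOmega_\hj\uc-Q^m_\hj\uc)\|^2$
by claiming that the $\hj$-th summand is supported in $\homega_\hj^m$. That is false: $Q^m_\hj\uc$ is supported in $\homega_\hj^m$, but $Q^\varOmega_\hj\uc$ solves a problem posed on all of $\varOmega$ and has global support (it merely decays exponentially away from $\homega_\hj$, which is a weaker statement), so the differences do not have finite overlap and the naive Cauchy--Schwarz-with-overlap argument does not apply. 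The paper closes this differently: it sets $v\ceq Q^\varOmega\uc-Q^m\uc$, writes $\alpha\|\nabla v\|^2\le\sum_{\hj}a(Q^\varOmega_\hj\uc-Q^m_\hj\uc,\,v(1-\eta_j^{k,1}))+a(Q^\varOmega_\hj\uc-Q^m_\hj\uc,\,v\eta_j^{k,1})$, bounds the near-field term by $\beta\|\nabla(Q^\varOmega_\hj\uc-Q^m_\hj\uc)\|\,\|\nabla v\|_{L^2(\omega_j^{k+1})}$, and kills the far-field term up to a controllable remainder by replacing $v\eta_j^{k,1}$ with the Lemma~\ref{HMP::lma:cutoff} correction $\tilde v\in\Vf(\varOmega\setminus\omega_j^{k-\kappa-3})$, against which $Q^\varOmega_\hj\uc-Q^m_\hj\uc$ is $a$-orthogonal. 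Only the local quantities $\|\nabla v\|_{L^2(\omega_j^{k+2})}$ then enter, and the overlap count $\sum_{\hj}\|\nabla v\|_{L^2(\omega_j^{k+2})}^2\lesssim k^d\|\nabla v\|_{L^2(\varOmega)}^2$ produces the $m^{d/2}$. You would need this (or a weighted Schur-type summation exploiting the exponential tails of each $Q^\varOmega_\hj\uc-Q^m_\hj\uc$) to make your final step rigorous; as written it rests on a support claim that does not hold.
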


\begin{proof}
Recall the definition $Q^m\uc\ceq\sum_{\smash{\hj\in\hJ}} Q_\hj^m(\uc)$ with
\begin{displaymath}
\int_{\homega_\hj^m} A \nabla Q_\hj^m(\uc)\cdot \nabla w\de x = \underbrace{- \int_\varOmega {\hphi}_\hj A \nabla \uc \cdot \nabla w \de x}_{F_\hj(w)}\enspace \text{for all } w \in \Vf(\homega_\hj^m),\enspace\hj\in\hJ.
\end{displaymath}
Note that the right-hand side $F_\hj$ of the local problem is zero for $w\in\Vf(\varOmega\setminus\homega_\hj)$.
Consider some fixed $\hj\in\hJ$ and choose $j\in\J$ such that $\omega_j\cap\homega_\hj\neq\emptyset$. Recall $\varrho = \bigl\lceil\frac{\hat H}H\bigr\rceil$, then we have $\homega_\hj\subseteq\omega_j^\varrho$  and thus $\Vf(\varOmega\setminus\omega_j^\varrho)\subseteq\Vf(\varOmega\setminus\homega_\hj)$. Hence $F_\hj$ satisfies the conditions from Lemma~\ref{HMP::lma:decay}.

Moreover, we get
$\omega_j^k\subseteq\homega_\hj^m$ for $k$ satisfying
\begin{equation}\label{HMP::eq:m-lesssim-k}
 m = \bigl\lceil\tfrac{k\cdot H}{\hat H}\bigr\rceil \leq k\bigl\lceil\tfrac H{\hat H}\bigr\rceil .
\end{equation}

Denote $v\ceq Q^\varOmega\uc-Q^m\uc \in \Vf$ and note that $I(v)=0$. Using the cut-off functions $\eta_j^{k,1}$ from Definition~\ref{HMP::def:cutoff}, we obtain
\begin{multline*}
\alpha\bigl\| \nabla v \bigr\|_{L^2(\varOmega)}^2
\leq \sum_{\smash{\hj\in\hJ}} \Bigl(\,\underbrace{\bigl(A\nabla\bigl(Q^\varOmega_\hj\uc-Q^m_\hj\uc\bigr), \nabla (v (1- \eta_j^{k,1}))\bigr)_{L^2(\varOmega)}}_{\mathrm I} \\+ \underbrace{(A\nabla\bigl(Q^\varOmega_\hj\uc-Q^m_\hj\uc\bigr), \nabla(v \eta_j^{k,1}))_{L^2(\varOmega)}}_{\mathrm{II}}\,\Bigr).
\end{multline*}
We bound the term $\mathrm I$ by
\begin{align*}
\mathrm I &\le\beta\bigl\| \nabla\bigl(Q^\varOmega_\hj\uc-Q^m_\hj\uc\bigr)\bigr\|_{L^2(\varOmega)} \bigl\| \nabla\bigl( v (1- \eta_j^{k,1}) \bigr) \bigr\|_{L^2(\omega_j^{k})} \\
&\le\beta\bigl\| \nabla\bigl(Q^\varOmega_\hj\uc-Q^m_\hj\uc\bigr)\bigr\|_{L^2(\varOmega)} \bigl( \left\| \nabla v\right\|_{L^2(\omega_j^{k})} +\bigl\| v \nabla\bigl( 1- \eta_j^{k,1} \bigr)\bigr\|_{L^2(\omega_j^{k} \setminus \omega_j^{k-1} )} \bigr)\\
&\lesssim\beta\bigl\| \nabla\bigl(Q^\varOmega_\hj\uc-Q^m_\hj\uc\bigr)\bigr\|_{L^2(\varOmega)} \bigl(\left\| \nabla v\right\|_{L^2(\omega_j^{k})} + H^{-1}\left\| v - I(v)\right\|_{L^2(\omega_j^{k} \setminus \omega_j^{k-1} )} \bigr)\\
&\lesssim \beta \left\| \nabla\bigl(Q^\varOmega_\hj\uc-Q^m_\hj\uc\bigr) \right\|_{L^2(\varOmega)} \left\| \nabla v \right\|_{L^2( \omega_j^{k+1} )} .
\end{align*}
Lemma \ref{HMP::lma:cutoff} yields the existence of $\tilde{v}\in \Vf(\varOmega \setminus \omega_j^{k-\kappa-3})$ with $$\bigl\| \nabla (v \eta_j^{k,1} - \tilde{v}) \bigr\|_{L^2(\varOmega)} \lesssim \left\| \nabla v \right\|_{L^2(  \omega_j^{k+2} )}.$$ We assume that $m$ is large enough such that $k\geq \varrho+\kappa+3$, then $\tilde v\in\Vf(\varOmega\setminus\homega_\hj)$ and hence
$$\int_\varOmega A \nabla\bigl(Q^\varOmega_\hj\uc-Q^m_\hj\uc\bigr) \cdot \nabla \tilde{v}\de x=0.$$
It follows that
\begin{align*}
\mathrm{II} &= \bigl(A\nabla\bigl(Q^\varOmega_\hj\uc-Q^m_\hj\uc\bigr), \nabla(v \eta_j^{k,1}-\tilde{v})\bigr)_{L^2(\varOmega)}\\
&\lesssim \beta\left\|\nabla\bigl(Q^\varOmega_\hj\uc-Q^m_\hj\uc\bigr) \right\|_{L^2(\varOmega)} \left\| \nabla v \right\|_{L^2( \omega_j^{k+2} )}.
\end{align*}
Combining the estimates for $\mathrm I$ and $\mathrm{II}$ finally yields
\begin{align}
\label{HMP::lemma-a-3-proof-eq-0}\bigl\| \nabla v \bigr\|_{L^2(\varOmega)}^2
&\lesssim \frac\beta\alpha\sum_{\smash{\hj\in\hJ}} \bigl\| \nabla\bigl(Q^\varOmega_\hj\uc-Q^m_\hj\uc\bigr) \bigr\|_{L^2(\varOmega)} \left\| \nabla v \right\|_{L^2( \omega_j^{k+2} )}\\
\notag&\lesssim \frac\beta\alpha \,k^{d/2} \Bigl( \sum_{\smash{\hj\in\hJ}} \bigl\| \nabla\bigl(Q^\varOmega_\hj\uc-Q^m_\hj\uc\bigr) \bigr\|_{L^2(\varOmega)}^2 \Bigr)^{1/2} \left\| \nabla v \right\|_{L^2(\varOmega)},
\end{align}
provided that $\left|\{i\in\J\mid\omega_i\subseteq\omega_j^{k+2}\}\right|\lesssim k^{d/2}$.

In order to bound $\bigl\| \nabla\bigl(Q^\varOmega_\hj\uc-Q^m_\hj\uc\bigr) \bigr\|_{L^2(\varOmega)}^2$, we use Galerkin orthogonality for the  local problems, which is
\begin{equation}
\label{HMP::galerkin-orthogonality-local-eq}\bigl\| \nabla\bigl(Q^\varOmega_\hj\uc-Q^m_\hj\uc\bigr) \bigr\|_{L^2(\varOmega)}^2 \lesssim \inf_{q \in \Vf(\omega_j^k) } \bigl\| \nabla (Q^\varOmega_\hj\uc-q ) \bigr\|_{L^2(\varOmega)}^2.
\end{equation}
\textbf{(I4)} yields the existence of $\tilde w\in \Vf$ such that 
\begin{gather*}
I (\tilde w)=I ((1-\eta_j^{k,1}) Q^\varOmega_\hj\uc ),\quad\|\nabla \tilde w\|_{L^2(\varOmega)}\lesssim \|\nabla I ((1-\eta_j^{k,1})Q^\varOmega_\hj\uc)\|_{L^2(\varOmega)},\\
\text{and}\quad\notag\supp(\tilde w)\subsetsim[\kappa] \supp((1-\eta_j^{k,1})Q^\varOmega_\hj\uc) \subseteq \omega_j^k.
\end{gather*}
We observe
\begin{equation}
\label{HMP::lemma-a-3-proof-eq-1}
\bigl\|\nabla I ((1-\eta_j^{k,1})Q^\varOmega_\hj\uc)\bigr\|_{L^2( \omega_j^{k+\kappa} )}^2
= \bigl\|\nabla I ((1-\eta_j^{k,1})Q^\varOmega_\hj\uc)\bigr\|_{L^2( \omega_j^{k+1} \setminus  \omega_j^{k-2} )}^2.
\end{equation}
With $p_\hj\ceq(1-\eta_j^{k,1})Q^\varOmega_\hj\uc-\tilde w\in \Vf(\omega_j^{k+\kappa})$, we obtain
\begin{align}
\notag \bigl\| \nabla\bigl(Q^\varOmega_\hj\uc-Q^m_\hj\uc\bigr) \bigr\|_{L^2(\varOmega)}^2
&\Stackrel{\eqref{HMP::galerkin-orthogonality-local-eq}}\lesssim \bigl\| \nabla (\eta_j^{k,1} Q^\varOmega_\hj\uc+ (1-\eta_j^{k,1})Q^\varOmega_\hj\uc - p_\hj)  \bigr\|_{L^2(\varOmega)}^2 \\
\notag&= \bigl\| \nabla (\eta_j^{k,1} Q^\varOmega_\hj\uc - \tilde w)  \bigr\|_{L^2(\varOmega)}^2
\\
\notag &\lesssim \|\nabla Q^\varOmega_\hj\uc\|_{L^2(\varOmega \setminus \omega_j^{k-2} )}^2 +  \|\nabla \tilde w\|_{L^2( \omega_j^{k+\kappa} )}^2\\
\notag &\lesssim\|\nabla Q^\varOmega_\hj\uc\|_{L^2(\varOmega \setminus \omega_j^{k-2} )}^2 \\\notag&\hspace{5em}
+ \|\nabla I ((1-\eta_j^{k,1})Q^\varOmega_\hj\uc)\|_{L^2( \omega_j^{k+\kappa} )}^2\\
\notag &\Stackrel{\eqref{HMP::lemma-a-3-proof-eq-1}}\lesssim \|\nabla Q^\varOmega_\hj\uc\|_{L^2(\varOmega \setminus \omega_j^{k-2} )}^2 \\\notag&\hspace{5em}
+ \|\nabla I ((1-\eta_j^{k,1})Q^\varOmega_\hj\uc)\|_{L^2( \omega_j^{k+1} \setminus  \omega_j^{k-2} )}^2
\\ 
\notag &\Stackrel{\textbf{(I3)}}\lesssim \|\nabla Q^\varOmega_\hj\uc\|_{L^2(\varOmega \setminus \omega_j^{k-3} )}^2 \\
\notag &\Stackrel{\text{Lemma~\ref{HMP::lma:decay}}}\lesssim \hspace{1em} \theta^{2 (k-3)} \bigl\|\nabla Q^\varOmega_\hj\uc \bigr\|_{L^2(\varOmega)}^2\\
&\Stackrel{\eqref{HMP::eq:m-lesssim-k}}\lesssim \ \tilde\theta^{2m} \bigl\|\nabla Q^\varOmega_\hj\uc \bigr\|_{L^2(\varOmega)}^2.
\label{HMP::lemma-a-3-proof-eq-3} 
\end{align}
Combining \eqref{HMP::lemma-a-3-proof-eq-0} and \eqref{HMP::lemma-a-3-proof-eq-3} proves the lemma.
\qedsquare\end{proof}

\section{Numerical Experiment}
\label{s:numericalexperiments}

In this section, we present numerical results for a special realization of the Multiscale Partition of Unity Method. We consider a ``coarse'' regular triangulation $\mathcal T_H$ of $\varOmega$, where $H$ denotes the maximum diameter of an element of $\mathcal T_H$. By $\mathcal{N}_H$ we denote the set of vertices of the triangulation. 
We choose the basis functions $\varphi_z$ as in Example~\ref{HMP::ex:1}b), i.e., the continuous and piecewise affine nodal basis functions associated with vertices $z\in\mathcal N=\J$. The second partition of unity (PU 2) is given by the indicator functions of the elements of the triangulation, i.e. $\{ \hphi_\hj\mid\hj \in \hJ \}\ceq\{\chi_T\mid\hspace{2pt} T \in \mathcal T_H\}$. The corrector problems given by \eqref{HMP::local-corrector-problem} are solved with a $P_1$ Finite Element method on a fine grid with resolution $h=2^{-8}$. The reference solution $u_h$ is therefore the $P_1$ Finite Element approximation in a space with mesh size $h=2^{-8}$. 

In order to estimate the accuracy of $u_h$ itself, we performed a second computation for the mesh size $h=2^{-10}$.
The relative $L^2$-error between the Finite Element approximation on a uniform mesh with resolution $h=2^{-8}$ and the Finite Element approximation on a uniform mesh with resolution $h=2^{-10}$ is $0.023$. The relative $H^1$-error is $0.3204$. However, we only compute the errors of $\ucm$ with respect to the reference solution (i.e. for $h=2^{-8}$), since this is the relevant error for investigating the effect of the coarse grid resolution and the decay of the multiscale basis functions on $\ucm$.

The extension patches $\homega_\hj^m$ can be defined by using the structure of the coarse grid by setting
\begin{equation}\label{truncation-fem}
\begin{split}
\homega_\hj^0 & \ceq T_j \in \mathcal T_H, \\
\homega_\hj^m & \ceq \cup\{T\in \mathcal T_H\;\vert\; T\cap \homega_\hj^{m-1}\neq\emptyset\}\quad m=1,2,\ldots .
\end{split}
\end{equation}
\begin{figure}[!ht]
\centering
\includegraphics[width=0.7\textwidth]{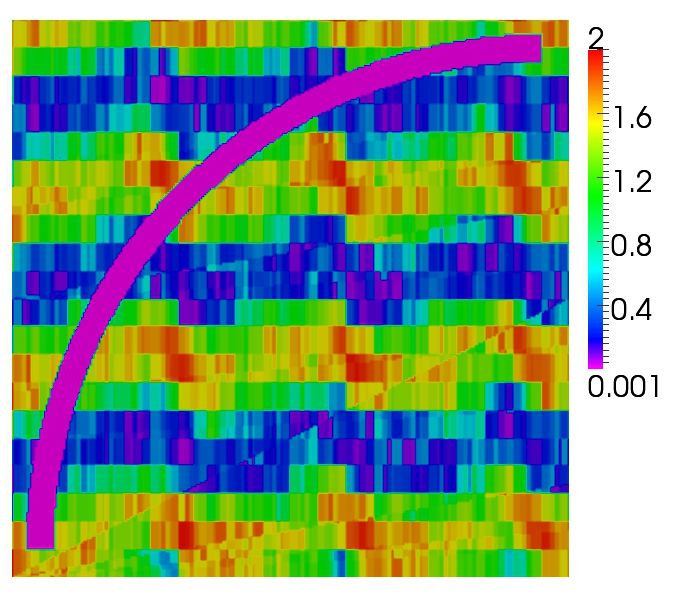}
\caption{ Plot of the rapidly varying and highly heterogeneous diffusion coefficient $a_\varepsilon$ given by equation \eqref{diffusion-coefficient}, which takes values between $0.01$ and $2$. The structure is disturbed by an isolating arc (purple) of thickness $0.05$ and with conductivity $10^{-3}$.}
\label{diffusion_problem_1}
\end{figure}

We consider the following model problem. Let $\varOmega \ceq \left]0,1\right[^2$ and $\varepsilon\ceq0.05$. Find $u_\varepsilon\in V$ with
\begin{alignat}{2}
\label{model-problem-numerics}- \Div \bigl( a_\varepsilon(x) \nabla u_\varepsilon(x) \bigr) 
&= x_1 - \tfrac12 &\qquad& \text{in } \varOmega \\
\nonumber\nabla u_\varepsilon(x) \cdot \nu &= 0 && \text{on } \partial \varOmega.
\end{alignat}
The scalar diffusion coefficient $a_\varepsilon$ in equation \eqref{model-problem-numerics} is depicted in Figure \ref{diffusion_problem_1}. It has a contrast of order $10^3$ and is constructed from the highly heterogeneous distribution 
\begin{displaymath}
c_\varepsilon(x_1,x_2)\ceq1 + \tfrac1{10} \sum_{j=0}^4 \sum_{i=0}^{j} \left( \tfrac{2}{j+1} \cos \left(\bigl\lfloor i x_2 - \tfrac{x_1}{1+i} \bigr\rfloor + \left\lfloor \tfrac{i x_1}\varepsilon \right\rfloor + \left\lfloor \tfrac{ x_2}\varepsilon \right\rfloor \right)\right)
\end{displaymath}
and an isolating arc of radius $r\ceq0.9$, thickness $\frac\varepsilon2$ and center $c_0\ceq(1-\varepsilon,\varepsilon)$. The coefficient $a_\varepsilon$ is then given by
\begin{gather}
\label{diffusion-coefficient}a_\varepsilon(x)\ceq \begin{cases}
10^{-3} &\text{if} \enspace \bigl||x-c_0| - r\bigr|<\frac\varepsilon{2}, \enspace x_2>\varepsilon \enspace\text{and} \enspace x_1 < 1-\varepsilon \\
(h \circ c_\varepsilon)(x) &\text{else,}\\
\end{cases}\\
\nonumber \text{with} \enspace h(t)\ceq\begin{cases}
t^4 &\text{for} \enspace \frac12 < t < 1 \\ 
t^{\frac{3}{2}} &\text{for} \enspace 1 < t < \frac{3}{2}  \\ 
t &\text{else}.
\end{cases}
\end{gather}

\begin{figure}[!ht]
\centering
\includegraphics[width=1.0\textwidth]{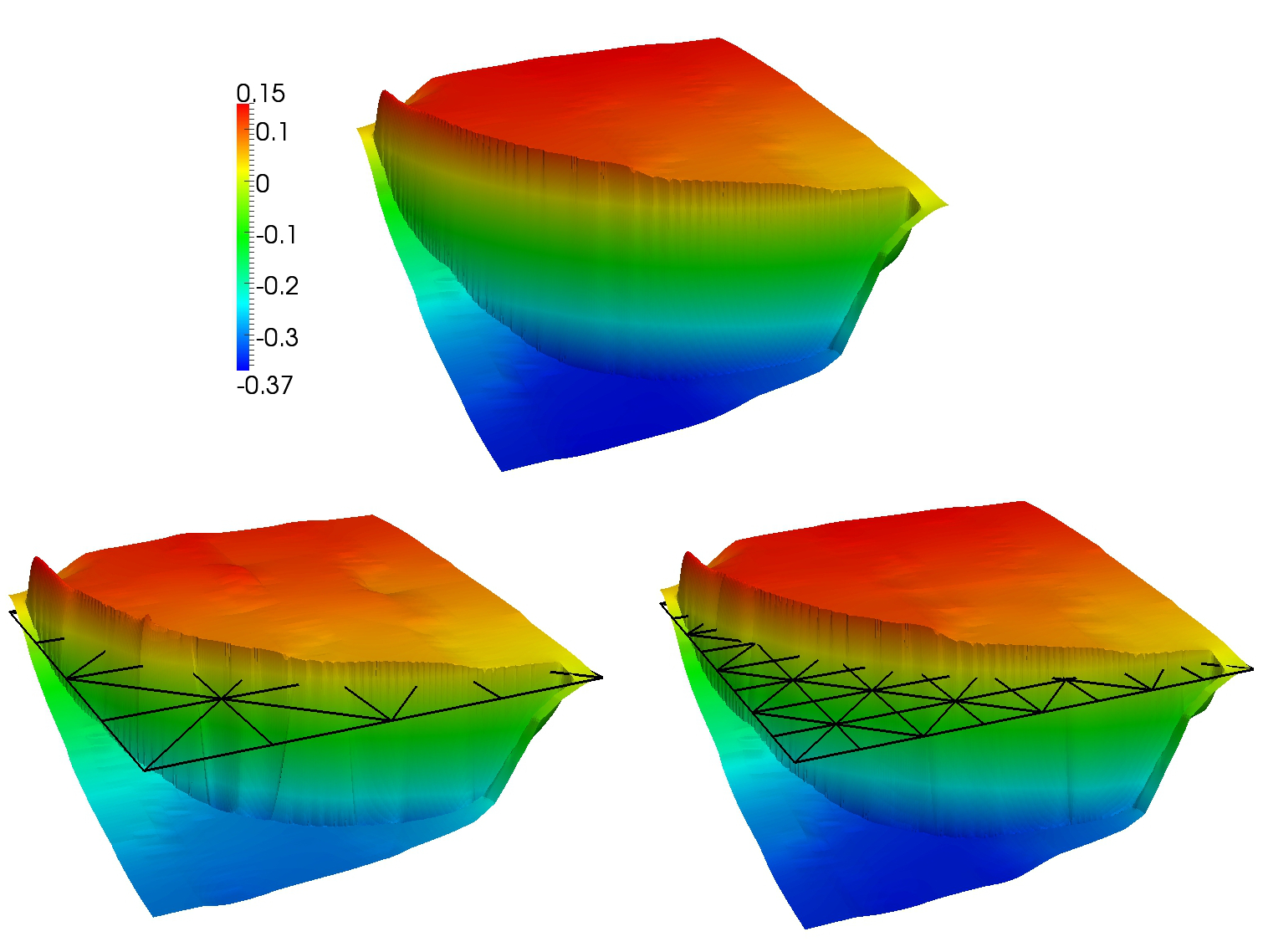}
\caption{ The top picture shows the $P_1$ finite element reference solution $u_h$ for $h=2^{-8}$. The left bottom picture shows the multiscale approximation $\ucm$ for $(H,m)=(2^{-2},1)$ together with the corresponding coarse grid. This solution already shows the essential features of $u_h$. The right bottom picture shows the multiscale approximation $\ucm$ for $(H,m)=(2^{-3},2)$ together with the corresponding coarse grid. }
\label{series-lod-warp}
\end{figure}

In our computation, we picked the truncation parameter $m$ (according to \eqref{truncation-fem}) to be in the span between $0$ and $2$ and the coarse mesh size $H$ to be in the span between $2^{-1}$ (i.e. $h=H^8$) and $2^{-4}$ (i.e. $h=H^2$). The results are depicted in Table~\ref{table-layers-results}. We observe that error stagnates if we decrease only $H$, without increasing $m$ at the same time. However, already the modification $(H,m)=(2^{-m-1},m) \mapsto (2^{-m-2},m+1)$ leads to a dramatic error reduction. Despite the high contrast of order $10^3$, we already obtain a highly accurate approximation for $(H,m)=(2^{-3},2)$. In this case, the multiscale approximation looks almost identical to the FEM reference solution for $h=2^{-8}$ (see Figure \ref{series-lod-warp}). Further numerical experiments can be found in \cite{HMP12,HP13,MP11}.
 
\begin{table}[t]
\caption{Results for the relative error between the Multiscale Partition of Unity approximation $\ucm$ and a reference solution $u_h$ on a fine grid of mesh size $h=2^{-8}\approx 0.0039 \ll \varepsilon$ which fully resolves the micro structure of the coefficient $a_\varepsilon$. We use the notation \mbox{$\|\ucm - u_h\|_{L^2(\varOmega)}^{\mbox{rel}}:=\|\ucm - u_h\|_{L^2(\varOmega)}/\|u_h\|_{L^2(\varOmega)}$} and analogously the same for $\|\ucm - u_h\|_{H^1(\varOmega)}^{\mbox{rel}}$. The truncation parameter $m$ determines the patch size and is given by (\ref{truncation-fem}).}
\label{table-layers-results}
\begin{center}
\begin{tabular}{|c|c|c|c|c|}
\hline $H$      & $m$ & $\|\ucm - u_h\|_{L^2(\varOmega)}^{\mbox{rel}}$ & $\|\ucm - u_h\|_{H^1(\varOmega)}^{\mbox{rel}}$ \\
\hline
\hline $2^{-1}$ & 0 & 0.867827  & 0.93475 \\
\hline $2^{-2}$ & 0 & 0.865630  & 0.96525 \\
\hline $2^{-2}$ & 1 & 0.167501  & 0.37387  \\
\hline $2^{-3}$ & 1 & 0.257826  & 0.61681  \\
\hline $2^{-3}$ & 2 & 0.037841  & 0.16525  \\ 
\hline $2^{-4}$ & 2 & 0.063645  & 0.25613  \\
\hline
\end{tabular}\end{center}
\end{table}

\bibliographystyle{amsplain}
\bibliography{HMP_references.bib}

\end{document}